\newcommand{\Ric}{\mathrm{Ric}}
\newcommand{\e}{\epsilon}
\renewcommand{\S}{\Sigma}
\renewcommand{\Im}{\mathrm{Im} \,}
\newcommand{\E}{\mathcal E}
\newcommand{\A}{\mathcal A}
\renewcommand{\o}{\omega}
\renewcommand{\O}{\mathcal O}
\newcommand{\supp}{\mathrm{supp}}
\renewcommand{\a}{\alpha}
\newcommand{\de}{\delta}
\renewcommand{\b}{\beta}
\renewcommand{\d}{\partial}
\newcommand{\abs}[1]{\left\lvert#1\right\rvert}
\renewcommand{\H}{\mathrm H}
\newcommand{\Ho}{\mathcal H}
\renewcommand{\E}{\mathcal E}
\newcommand{\U}{\mathcal U}
\newcommand{\md}{\mathrm d}
\newcommand{\p}{\mathrm p}
\renewcommand{\q}{\mathrm q}
\newcommand{\Char}{\mathrm{Char}}
\newcommand{\s}{\sigma}
\newcommand{\Diff}{\mathrm{Diff}}
\newcommand{\bl}{{\mathrm{b}}}
\newcommand{\Diffb}{\Diff_\bl}
\newcommand{\sgn}{\operatorname{sgn}}
\theoremstyle{plain}
\newtheorem{thm}{Theorem}[section]
\newtheorem{prop}[thm]{Proposition}
\newtheorem{prop: main}[thm]{Main Lemma}
\newtheorem{lemma}[thm]{Lemma}
\theoremstyle{definition}
\newtheorem{definition}[thm]{Definition}
\newtheorem{remark}[thm]{Remark}
\newtheorem{assumption}[thm]{Assumption}
\newcommand{\R}[0]{\mathbb{R}}						
\newcommand{\C}[0]{\mathbb{C}}						
\title[Wave equations in the subextremal Kerr-de Sitter spacetime]{Wave equations in the Kerr-de Sitter spacetime: the full subextremal range}
\author{Oliver Petersen}
\address{Department of Mathematics, KTH Stockholm, Lindstedtsvägen 25, 11428 Stockholm, Sweden}
\email{oliverlp@kth.se}
\author{Andr\'{a}s Vasy}
\address{Department of Mathematics, Stanford University, CA 94305-2125, USA}
\email{andras@math.stanford.edu}
\keywords{Subextremal Kerr-de Sitter spacetime, resonances, quasinormal modes,
  radial points, normally hyperbolic trapping}
\subjclass[2010]{35L05, 35P25, 58J45, 83C30}
\begin{document}
\hbadness=100000
\vbadness=100000

\begin{abstract}
We prove that solutions to linear wave equations in a subextremal
Kerr-de Sitter spacetime have asymptotic expansions in quasinormal modes up to a decay order given by the normally hyperbolic trapping, extending \cite{V2013}.
The main novelties are a different way of obtaining a Fredholm setup
that defines the quasinormal modes and a new analysis of the trapping of lightlike geodesics in the Kerr-de Sitter spacetime, both of which apply in the full subextremal range.
In particular, this reduces the question of decay for solutions to
wave equations to the question of mode stability.
\end{abstract}

\maketitle

\tableofcontents
\begin{sloppypar}

\section{Introduction}
Kerr-de Sitter metrics are Lorentzian metrics (which we take of
signature $(-,+,+,+)$) on
$\R_{t_*}\times(0,\infty)_r\times S^2$ solving Einstein's equation
$\Ric(g)=\Lambda g$ with a
cosmological constant $\Lambda>0$.  They depend on three parameters:
apart from the cosmological constant $\Lambda$, these are the mass
$m>0$ and the angular
momentum $a
\in \R$ of the black hole the metric corresponds to. Here the `black
hole' nature corresponds to the presence of certain null-hypersurfaces,
called horizons, which for the Kerr-de Sitter metrics lie at certain
values of $r$. The metric is given in terms of a quartic polynomial $\mu$, and
the three parameters then have to
satisfy an additional condition, namely that $\mu$ has four distinct
real roots; the significance of this condition is due to the horizons
lying at the roots of $\mu$. Kerr-de Sitter metrics corresponding to
these parameters are called {\em subextremal}.

In \cite{V2013} the wave equation on Kerr-de Sitter spacetimes was
analyzed for a large but not complete range of the parameters by
showing that it fits into the Fredholm framework based on microlocal
analysis developed there; the result was an expansion of solutions of
the wave equation into a finite number of terms, corresponding to
{\em quasinormal modes} described below in Section~\ref{subsec:result}, modulo an
exponentially decaying tail. 
That paper then formed the basis of the
approach to non-linear wave equations by Hintz and Vasy \cites{HV2015,HV2016} which culminated in the proof of the stability of slowly rotating Kerr-de Sitter black holes in \cite{HV2018}; the first black hole stability result without symmetry assumptions. 
The purpose of this paper is to complete \cite{V2013} by extending its results to the full subextremal range. 
In particular, we show, in the full subextremal range, the expansion of solutions of the wave
equation into a finite number of terms, given by quasinormal modes,
modulo an exponentially decaying tail, see Theorem~\ref{thm: asymptotic expansion}. While for linear equations the question
whether any of these quasinormal mode terms is non-decreasing is
irrelevant, it becomes highly relevant for non-linear wave equations,
where the absence (perhaps apart from certain well-understood ones) of quasinormal modes corresponding to non-decaying
terms is called {\em mode stability}, see Section~\ref{subsec:result}.

It turns out that the additional limitations of \cite{V2013} had two
separate origins. 
The basic approach of \cite{V2013} was to Fourier transform the wave operator along the Killing vector field $-\partial_{t_*}$ to obtain a family of operators, $P_\s$, depending on the Fourier dual variable $\s$. 
These operators are non-elliptic, hence their analysis involves the Hamilton flow in the characteristic set of the principal symbol. 
One of the additional assumptions in \cite{V2013}, denoted by (6.13) there, was to ensure that for each $\s \in \C$, $P_\s$ satisfies a non-trapping condition; it was shown there that indeed in the limiting case of (6.13), trapping appears. 
It was shown that under this {\em classical non-trapping} condition the family $P_\s$ is Fredholm for each $\s$ with the analytic Fredholm theorem applicable. 
The other additional assumption of \cite{V2013} concerned the behavior of the family $P_\s$ for $\s$ with large real part and bounded imaginary part, namely in this large parameter (or as rescaled there, semiclassical) sense, which is stronger than the fixed $\s$ consideration, the only trapping is normally hyperbolic trapping; this corresponds to the photon sphere when $a=0$.
This {\em semiclassical normally hyperbolic trapping} assumption was shown under the additional condition (6.27) in \cite{V2013}, namely
that $|a|<\frac{\sqrt{3}}{2}m$. 
We recall that, with a basic microlocal analysis background, the analysis of \cite{V2013} was self-contained apart from using the normally hyperbolic trapping analytic results (estimates for a microlocalized at the trapping version of $P_\s^{-1}$) of Wunsch and Zworski \cite{WZ2011}. 
Normally hyperbolic trapping has since been
investigated in numerous papers \cites{Dya2015b, Dya2016,NZ2015,H2021}, giving a more precise
version of \cite{WZ2011}, but for the present purposes \cite{WZ2011}
still suffices.

In this paper we remove these limitations in two steps. First, we change the Killing vector field with respect to which we perform the Fourier transform. 
It turns out that for an appropriate choice of the Killing vector field, the Fourier transformed operator $P_\s$ is classically non-trapping in the full subextremal range. 
Second, we show that in fact the semiclassical normally hyperbolic trapping holds in the full subextremal range as well.
Since these were the only two additional limitations in \cite{V2013} in the
subextremal range, this immediately implies that all of the results of
\cite{V2013} in fact hold in the full subextremal range.

While in this paper we focus on the scalar wave equation, the changes
when turning to tensorial wave equations
in general are minor, as shown earlier already in \cite{HV2018},
mainly affecting certain threshold quantities. We will discuss this
elsewhere.

As a comparison, we mention that it has been known for some time that
the full subextremal range for the vanishing cosmological constant
($\Lambda=0$) Kerr spacetime, $|a|<m$, behaves in the same way as for
small $|a|$, see \cites{FKSY06, DR2011, Dya2015, SR2015, DRSR16} and references
therein, and there even mode stability is known, see
\cites{W1989,SR2015,CTC2021} and references therein.

\subsection{Kerr-de Sitter spacetimes}
We now describe the subextremal condition in more detail.
The polynomial $\mu$ is given by
\begin{equation} \label{eq: mu}
	\mu(r) := \left(r^2 + a^2\right)\left(1 -\frac{\Lambda r^2}3\right) - 2mr,
\end{equation}
and the subextremality condition is that it has four distinct real roots 
\[
	r_- < r_C < r_e < r_c,
\]
which is equivalent to the discriminant condition
\begin{equation} \label{eq: discriminant}
	- \left( 1 + \frac{\Lambda a^2}3 \right)^4 \left( \frac a m \right)^2 + 12 \left( 1 - \frac{\Lambda a^2}3 \right) \Lambda a^2 + \left(1 - \frac{\Lambda a^2}3 \right)^3 - 9 \Lambda m^2 > 0.
\end{equation}
It follows that there is a unique $r_0 \in (r_e, r_c)$, such that
\begin{equation} \label{eq: r_0}
	\mu'(r_0) = 0.
\end{equation}
As we will see, $r_0$ will play a crucial role in choosing the Killing
vector field for the Fourier transform.
The domain of outer communication $M$ in the subextermal Kerr-de Sitter spacetime is given (in Boyer-Lindquist coordinates) by the real analytic spacetime
\[
	M := \R_t \times (r_e, r_c)_r \times S^1_\phi \times (0, \pi)_\theta,
\]
with real analytic metric
\begin{align*}
	g
		&= (r^2 + a^2 \cos^2(\theta))\left( \frac{\md r^2}{\mu(r)} + \frac{\md \theta^2}{c(\theta)}\right) \\*
		&\quad + \frac{c(\theta)\sin^2(\theta)}{b^2\left(r^2 + a^2 \cos^2(\theta)\right)}\left(a \md t - \left(r^2 + a^2\right)\md \phi\right)^2 \\*
		&\quad - \frac{\mu(r)}{b^2\left(r^2 + a^2 \cos^2(\theta)\right)}\left(\md t - a \sin^2(\theta)\md \phi\right)^2,
\end{align*}
where
\[
	b := 1 + \frac{\Lambda a^2}3, \quad c(\theta) := 1 + \frac{\Lambda a^2}3 \cos^2(\theta).
\]
One easily verifies that this metric extends real analytically to the north and south poles $\theta = 0, \pi$.

The Boyer-Lindquist coordinates are singular at the roots of $\mu$.
We therefore extend this metric real analytically over the future event horizon and the future cosmological horizon, corresponding to the roots $r = r_e$ and $r = r_c$, respectively.
One way to do this is by the following coordinate change:
\begin{equation}
	\begin{split} \label{eq: extended coordinates}
	t_*
		&:= t - \Phi(r), \\
	\phi_*
		&:= \phi - \Psi(r),
	\end{split}
\end{equation}
where $\Phi$ and $\Psi$ satisfy
\begin{align*}
		\Phi'(r)
			&= b \frac{r^2 + a^2}{\mu(r)} f(r), \\
		\Psi'(r)
			&= b \frac a{\mu(r)} f(r),
\end{align*}
where
\[
	f: (r_e - \de, r_c + \de) \to \R,
\]
is a real analytic function such that
\begin{equation} \label{eq: f cond}
	f(r_e) 
		= -1, \quad f(r_c) = 1.
\end{equation}
The new form of the metric is
\begin{equation}
\begin{split}
	g_*
		&= (r^2 + a^2 \cos^2(\theta))\frac{1 - f(r)^2}{\mu(r)} \md r^2 \\*
		&\qquad - \frac 2 b f(r) (\md t_* - a \sin^2(\theta) \md \phi_*)\md r \\*
		&\qquad - \frac{\mu(r)}{b^2\left(r^2 + a^2 \cos^2(\theta)\right)}\left(\md t_* - a \sin^2(\theta) \md \phi_* \right)^2 \\*
		&\qquad + \frac{c(\theta)\sin^2(\theta)}{b^2\left(r^2 + a^2 \cos^2(\theta)\right)}\left(a\md t_* - \left(r^2 + a^2\right) \md \phi_*\right)^2 \\*
		&\qquad + (r^2 + a^2 \cos^2(\theta))\frac{\md \theta^2}{c(\theta)},
\end{split}
\label{eq: metric at horizons}
\end{equation}
which extends real analytically to 
\[
	\R_{t_*} \times (r_e - \de, r_c + \de)_r \times S^2_{\phi_*, \theta}.
\]
The two real analytic lightlike hypersurfaces
\begin{align*}
	\Ho_e^+ 
		&:= \R_{t_*} \times \{r_e\} \times S^2_{\phi_*, \theta}, \\
	\Ho_c^+
		&:= \R_{t_*} \times \{r_c\} \times S^2_{\phi_*, \theta},
\end{align*}
are called the \emph{future event horizon} and \emph{future cosmological horizon}, respectively.
Note that the real analytic Killing vector fields $\d_t$ and $\d_\phi$, in Boyer-Lindquist coordinates, extend to real analytic Killing vector fields $\d_{t_*}$ and $\d_{\phi_*}$ over the horizons.

\begin{remark} \label{rmk: t star hypersurfaces}
We claim that there is a real analytic choice of $f$, such that the constant $t_*$ hypersurfaces are everywhere spacelike.
It is straightforward to check that this is equivalent to
\begin{equation} \label{eq: dt star timelike}
	1 - \frac{a^2 \mu(r)}{(r^2 + a^2)^2} 
		> f(r)^2.
\end{equation}
The left hand side in \eqref{eq: dt star timelike} is indeed positive, since $\mu(r) < r^2 + a^2$ for $r > 0$.
Let 
\[
	f_0: (r_e - \de, r_c + \de) \to \R
\]
be any real analytic function satisfying \eqref{eq: f cond} and write
\[
	f(r)
		= f_0(r) + \mu(r) f_1(r).
\]
The function $f$ satisfies \eqref{eq: f cond} for any choice of real analytic function 
\[
	f_1: (r_e - \de, r_c + \de) \to \R.
\]
Define
\[
	f_\pm(r) 
		:= - \frac{f_0(r)}{\mu(r)} \pm \frac1{\mu(r)} \sqrt{1 - \frac{a^2 \mu(r)}{(r^2 + a^2)^2}}
\]
for any $r \notin \{r_e, r_c\}$.
Note that $f_-$ has a continuous extension on $(r_e-\de, r_c)$ and $f_+$ has a continuous extension on $(r_e, r_c+\de)$ and
\[
	\lim_{r \downarrow r_e} f_+(r) = \infty = - \lim_{r \uparrow r_c} f_-(r).
\]
The condition \eqref{eq: dt star timelike} becomes 
\begin{equation} \label{eq: f 1 condition}
	f_1(r) 
		\in 
		\begin{cases}
			(f_-(r), \infty), & r \in (r_e-\de, r_e], \\
			(f_-(r), f_+(r)), & r \in (r_e, r_c), \\
			(-\infty, f_+(r)), & r \in [r_c, r_c+ \de).
		\end{cases}
\end{equation}
There are many real analytic functions $f_1$ satisfying this condition, making sure that the constant $t_*$ hypersurfaces are everywhere spacelike.
\end{remark}

\subsection{Main result}\label{subsec:result}

\begin{assumption} \label{ass: main} \
\begin{itemize}
\item Let $(M_*, g_*)$ be a subextremal Kerr-de Sitter spacetime, extended over the future event horizon and the future cosmological horizon, where
\[
	M_* := \R_{t_*} \times (r_e - \de, r_c + \de)_r \times S^2_{\phi_*, \theta},
\]
with $\de > 0$ small enough so that the boundary hypersurfaces
\[
	\R_{t_*} \times \{r_e - \de\} \times S^2_{\phi_*, \theta}, \quad  \R_{t_*} \times \{r_c + \de\} \times S^2_{\phi_*, \theta}
\]
are \emph{spacelike} and with $f$ chosen as in Remark \ref{rmk: t star hypersurfaces} so that the hypersurfaces
\[
	\{t_* = c \} \times (r_e - \de, r_c + \de)_r \times S^2_{\phi_*, \theta}
\]
are \emph{spacelike}, for all $c \in \R$.
\item Let $A$ be a smooth complex valued function on $M_*$ such that
\[
	\d_{t_*} A = \d_{\phi_*} A = 0.
\]
We let $P$ be the linear wave operator given by
\[
	P = \Box + A.
\]
\end{itemize}
\end{assumption}

\noindent
For any subset $\U \subset M_*$, we use the notation $C^\infty(\U)$ and $C^\o(\U)$ for the smooth and real analytic complex functions on $\U$, respectively.

\subsubsection{Quasinormal modes}

One of the main novelties in this paper is a new definition of quasinormal modes. 
More precisely, we define the quasinormal modes with respect to a different Killing vector field than in previous literature.
As mentioned above, there is a unique $r_0 \in (r_e, r_c)$ such that
\[
	\mu'(r_0) = 0.
\]

\begin{definition}[Quasinormal mode]
A complex function
\[
	u \in C^\infty(M_*)
\]
is called a {\em quasinormal mode}, with {\em quasinormal mode frequency} $\s \in \C$, if
\[
	\left(\d_{t_*} + \frac a{r_0^2 + a^2} \d_{\phi_*}\right) u = - i \s u
\]
and 
\[
	P u = 0.
      \]
Quasinormal modes and mode frequencies are also called resonant states and resonances.
\end{definition}

This certain choice of $r_0$ is important in order to get a Fredholm theory for the induced mode equation, which applies in the full subextremal range.

\begin{remark}\label{rem:quasi}
We can write any quasinormal mode as
\[
	u = e^{- i \s t_*}v_\s,
\]
where
\[
	\left(\d_{t_*} + \frac a{r_0^2 + a^2} \d_{\phi_*}\right) v_\s = 0.
\]
\end{remark}

Our first main result is the following:

\begin{thm}[Discrete set of quasinormal modes] \label{thm: QNMs} \ \\
Let $(M_*, g_*)$ and $P$ be as in Assumption \ref{ass: main}.
Then there is a discrete set of quasinormal mode frequencies.
More precisely, there is a discrete set $\A \subset \C$ such that
\[
	\s \in \A
\]
if and only if there is a quasinormal mode 
\[
	u \in C^\infty(M_*)
\] 
with frequency $\s$.
Moreover, for each $\s \in \A$, the space of quasinormal modes is finite dimensional.
If the coefficients of $P$ are real analytic, then the quasinormal modes are real analytic.
\end{thm}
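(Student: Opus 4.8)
The plan is to reduce Theorem~\ref{thm: QNMs} to an application of the analytic Fredholm theorem for a holomorphic family of operators $P_\s$, obtained by Fourier transforming $P$ along the Killing vector field $W := \d_{t_*} + \frac a{r_0^2+a^2}\d_{\phi_*}$. Concretely, writing $u = e^{-i\s t_*} v$ with $W v = 0$ (as in Remark~\ref{rem:quasi}), the equation $Pu = 0$ becomes $P_\s v = 0$ for an operator $P_\s$ acting on functions on a compact manifold with boundary $\overline{X}$, obtained from the slice $\{t_* = 0\}$ by compactifying at $r = r_e-\de$ and $r = r_c+\de$; here $\s$ enters polynomially, so $\s \mapsto P_\s$ is holomorphic (indeed entire) with values in the second-order differential operators on $\overline{X}$. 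The heart of the matter is to place $P_\s$ into a Fredholm framework, i.e. to identify Hilbert spaces $\mathcal X^s \subset H^s$, $\mathcal Y^{s-1} = H^{s-1}$ on which $P_\s: \mathcal X^s \to \mathcal Y^{s-1}$ is Fredholm of index zero for all $\s$, with $(P_\s - z)^{-1}$ invertible for some $(\s,z)$ so that the analytic Fredholm theorem gives meromorphic invertibility of $P_\s^{-1}$, hence a discrete set $\A$ of $\s$ for which $P_\s$ has nontrivial kernel, with finite-dimensional kernels.

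The key steps, in order, are as follows. First I would write out $P_\s$ explicitly and verify its structure at the boundary hypersurfaces $r = r_e - \de$ and $r = r_c + \de$: because these are chosen spacelike and the $t_*$-slices are spacelike (Assumption~\ref{ass: main}), $P_\s$ is a non-elliptic operator whose characteristic set meets the boundary transversally, so the natural setup is that of microlocal estimates with complex absorption or, more cleanly, extension across the artificial boundaries to a slightly larger manifold where one adds a complex absorbing term $-iQ_\s$ supported outside $\overline{X}$; this is exactly the mechanism of \cite{V2013}, and the relevant propagation, radial point, and trapping estimates needed for the Fredholm property are precisely the classical non-trapping and semiclassical normally hyperbolic trapping statements that the present paper establishes in the full subextremal range (and which I may invoke as proved earlier in the paper). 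Second, I would verify the radial point threshold conditions at the horizons $\Ho_e^+$, $\Ho_c^+$: the generalized radial sets there dictate a range of admissible Sobolev orders $s$ (depending on $\Im\s$ and on the subprincipal/first-order terms, in particular on $A$ and on the surface gravities), and one checks these sets are nonempty so that a valid $s$ exists. Third, with propagation of singularities plus radial point estimates plus the normally hyperbolic trapping estimate of Wunsch–Zworski \cite{WZ2011}, one obtains the a priori estimate $\norm{v}_{\mathcal X^s} \le C(\norm{P_\s v}_{H^{s-1}} + \norm{v}_{H^{s_0}})$ and the analogous estimate for the adjoint $P_\s^*$ on the dual (backward) problem, which together give that $P_\s$ is Fredholm of index zero.

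Once the Fredholm setup is in place, the discreteness follows from the analytic Fredholm theorem: one exhibits a single $\s$ (e.g. with $\Im\s$ very large, where the trapped-set estimate combined with the elliptic/propagation estimates gives invertibility, since for $\Im\s$ large enough the semiclassical parameter plays no role and the lower-order term is absorbed) at which $P_\s^{-1}: H^{s-1} \to \mathcal X^s$ exists and is bounded; then since $P_\s$ depends holomorphically on $\s$, $\s \mapsto P_\s^{-1}$ extends to a meromorphic family on all of $\C$ with poles of finite rank, so $P_\s$ fails to be invertible exactly on a discrete set $\A$, and there $\ker P_\s$ is finite dimensional. Translating back, $\s \in \A$ iff there is a nonzero $v$ with $P_\s v = 0$, iff $u = e^{-i\s t_*}v$ is a quasinormal mode with frequency $\s$; elliptic regularity away from the characteristic set, together with the fact that $P_\s v = 0$ and the real-analytic structure of $g_*$ and (if assumed) of $A$, upgrades $v$ to $C^\infty$ (respectively $C^\o$) — real analyticity here follows because $P_\s$ with real-analytic coefficients has a real-analytic wavefront set controlled analogue of propagation of singularities (or, more elementarily, because $v$ restricted to the interior $M$ solves a hyperbolic equation with analytic coefficients off the horizons and extends analytically across them by the analytic structure of $g_*$). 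The main obstacle I expect is Step one–three: setting up the complex absorption / extended manifold so that the radial point thresholds at both horizons are simultaneously compatible with a single Sobolev order $s$ and so that the trapping is exactly the normally hyperbolic trapping covered by \cite{WZ2011} — i.e. verifying that the new Killing vector field $W$ tied to $r_0$ genuinely yields the classical non-trapping and semiclassical normally hyperbolic trapping properties in the full subextremal range, which is the substantive geometric input of this paper rather than a routine adaptation of \cite{V2013}.
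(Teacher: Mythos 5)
Your overall strategy --- Fourier transforming along the Killing vector field $\partial_{t_*}+\frac{a}{r_0^2+a^2}\partial_{\phi_*}$, placing the resulting holomorphic family $P_\s$ into the Fredholm framework of \cite{V2013} via radial point and propagation estimates on the slice with spacelike caps, proving invertibility for $\Im\s\gg 1$, and concluding by the analytic Fredholm theorem (taking the union over admissible Sobolev orders $s$ to cover all of $\C$) --- is exactly the paper's route to the discreteness and finite-dimensionality statements. One attribution should be corrected, though: you list the Wunsch--Zworski normally hyperbolic trapping estimate \cite{WZ2011} among the ingredients of the fixed-$\s$ Fredholm property. It is not one. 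The entire point of the new Killing vector field is that the mode operator $P_\s$ is \emph{classically non-trapping} in the full subextremal range (Lemma~\ref{le: bicharacteristics}, via an explicit escape function built from $r-r_0$), so every bicharacteristic reaches the radial sets at the horizons or the spacelike caps, and no trapping estimate is needed at fixed $\s$; had there been genuine fixed-$\s$ trapping, the semiclassical estimate of \cite{WZ2011} would not close the Fredholm estimate. The trapping, and hence \cite{WZ2011}, enters only in the high-energy estimates behind Theorem~\ref{thm: asymptotic expansion}.

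The genuine gap is in the last clause of the theorem. Your proposed justifications for real analyticity of the quasinormal modes do not work: the mode equation $P_\s v=0$ on the slice is not hyperbolic --- it is elliptic where $\partial_{\tau_*}$ is timelike, of real principal type in the ergoregions, and degenerate at the radial sets over the horizons --- so neither analytic elliptic regularity nor ``analytic continuation across the horizons by analyticity of $g_*$'' applies, and real-analyticity of coefficients does not imply real-analyticity of solutions of non-elliptic equations; an analytic analogue of propagation of singularities through the characteristic radial sets is precisely what is missing and is nontrivial there. The paper handles this by following \cite{PV2021}, which conjugates by horizon-adapted Killing vector fields to put the mode operator into Keldysh form near each horizon and then invokes the analytic hypoellipticity theorem of Galkowski and Zworski \cite{GZ2020}, afterwards transferring the conclusion back to the present Killing vector field by decomposing into $\partial_{\psi_*}$-eigenmodes. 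This input cannot be replaced by the soft arguments you sketch.
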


\noindent
The discrete set of quasinormal modes on the Kerr-de Sitter spacetime is analogous to eigenfunctions of an elliptic operator on e.g.\ a compact manifold without boundary.
The quasinormal mode frequencies are analogous to the corresponding eigenvalues.
Just as eigenvalues and eigenfunctions depend on the operator, i.e.\
generally change when the coefficients change, the quasinormal modes
and frequencies of course also depend on the operator.

This theorem is proved in Section~\ref{sec:Fredholm} by showing that
$\s\in\A$ is equivalent to the lack of invertibility of a certain
Fredholm operator $P_\s$. 
Since $P_\s$ depends analytically on $\s$ in the appropriate
sense, the analytic Fredholm theorem guarantees the discreteness of
$\A$.

A difference between the eigenfunctions and eigenvalues of elliptic
self-adjoint operators and the present case is that the meromorphic
family $P_\s^{-1}$ can have higher order poles, though they are
finite rank. These Laurent coefficients then give rise to {\em
  generalized quasinormal modes} which play a role in the asymptotic
expansion of solutions of wave equations below.

While the Killing vector field in the definition of quasinormal modes
may seem curious, and thus the condition on $v_\s$ in
Remark~\ref{rem:quasi} odd, one way to think about this, and indeed
this is how the proof proceeds in Section~\ref{sec:Fredholm}, is to change coordinates, namely
replace $\phi_*$ by $\psi_*=\phi_*-\frac{a}{r_0^2+a^2}t_*$, while
keeping $t_*$ unchanged (but call it $\tau_*=t_*$ for clarity). In the
new coordinates, the quasinormal modes are functions of the new
`spatial' variables, $r,\theta,\psi_*$, i.e.\ are annihilated by $\partial_{\tau_*}$. One can think of this step as
a refinement of the earlier coordinate changes (namely the introduction of
$t_*,\phi_*$ in place of $t,\phi$) that were necessitated by the horizons. While the metric
is already well-behaved after those coordinate changes, thus the wave
operator has smooth coefficients and is non-degenerate, the solvability
theory for non-elliptic operators is much more intricate than for
elliptic operators, and this further refinement plays a key role in
the analysis.

\subsubsection{Asymptotic expansion}

Our second main result concerns the behavior of solutions to the
wave equation. 
For the statement of this, we use the standard Sobolev spaces based on the cylindrical geometry of $M_*$, i.e.\ use the vector fields $\partial_{t_*}$, $\partial_r$ and vector fields on the sphere. 
For instance, considering $(r_e-\delta,r_c+\delta)_r \times S^2_{\theta,\phi_*}\subset\R^3_y$ as spherical coordinates, for non-negative integers $s$ we simply have
$$
	\|u\|_{\bar H^s}^2=\sum_{j+|\beta|\leq s}\|\partial_{t_*}^j\partial_y^\beta u\|^2_{L^2(M_*,dg_*)}.
$$
Here the bar over $H$ corresponds to H\"ormander's notation for
extendible distributions, see \cite{HorIII}. 
We remark that if one compactifies
$M_*$ via replacing $t_*$ by $T_*=e^{-t_*}$, adding $T_*=0$ as an
ideal boundary, then these spaces are Melrose's b-Sobolev spaces, see
\cite{Me1993}, which is how the result was phrased in \cite{V2013}. Changing to the variables $(\tau_*,\psi_*)$ leaves this
definition unchanged, up to equivalence of norms, i.e.\ we could
equally well write, with $(r_e-\delta,r_c+\delta)_r \times S^2_{\theta,\psi_*}\subset\R^3_z$ being spherical coordinates,
$$
\|u\|_{\bar H^s}^2=\sum_{j+|\beta|\leq s}\|\partial_{\tau_*}^j\partial_z^\beta u\|^2_{L^2(M_*,dg_*)}.
$$

We then have the following:

\begin{thm}[The asymptotic expansion of waves] \label{thm: asymptotic expansion} \ \\
Let $(M_*, g_*)$ and $P$ be as in Assumption \ref{ass: main} and let $t_0 \in \R$.
There are $C, \de > 0$ such that for $0 < \e < C$ and
\[
	s > \frac12 + \b \e,
\]
where $\b$ is defined in \eqref{eq: beta}, any solution to
\[
	Pu = f
\]
with $f \in e^{- \e t_*} \bar H^{s - 1 + \de}(M_*)$ and with $\supp(u) \cup \supp(f) \subset \{t_* > t_0\}$ has an asymptotic expansion
\begin{align*}
	u - \sum_{j = 1}^N \sum_{k = 0}^{k_j} t_*^k e^{- i \s_j t_*} v_{jk} \in e^{- \e t_*} \bar H^s(M_*),
\end{align*}
where $\s_1, \hdots, \s_N$ are the (finitely many) quasinormal mode frequencies with 
\[
	\Im \s_j > - \e
\]
and $k_j$ is their multiplicity, and where $e^{-i\s_j t_*}v_{jk}$ are
the $C^\infty$ (generalized) quasinormal modes with frequency $\s_j$ which are
real analytic if the coefficients of $P$ are such.
\end{thm}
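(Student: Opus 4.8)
The plan is to run the Fourier--Mellin transform plus contour deformation argument of \cite{V2013}, which now applies in the full subextremal range precisely because the two extra hypotheses of that paper --- classical non-trapping for the stationary family and semiclassical normally hyperbolic trapping --- have been established in the present paper. First I would reduce to a stationary problem by passing to the coordinates $\tau_* = t_*$, $\psi_* = \phi_* - \frac{a}{r_0^2+a^2} t_*$, in which $\d_{t_*} + \frac{a}{r_0^2+a^2}\d_{\phi_*} = \d_{\tau_*}$ and $P = \Box + A$ has $\tau_*$-independent coefficients. Conjugating by the Mellin transform in $\tau_*$, $\hat u(\s) = \int e^{i\s\tau_*} u\, d\tau_*$, turns $P$ into the holomorphic family $P_\s$ on the spatial slice $X := (r_e-\de, r_c+\de)_r \times S^2_{\theta,\psi_*}$ from Section~\ref{sec:Fredholm}, and $\widehat{Pu}(\s) = P_\s \hat u(\s)$. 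The forward support hypothesis $\supp u \cup \supp f \subset \{t_* > t_0\}$ makes $\hat f(\s)$ holomorphic and rapidly decaying (in the relevant $\bar H$ norms, after the loss in Step 2) in the upper half-plane, so the inverse Mellin representation of $u$ on a high horizontal contour makes sense.

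Next I would invoke the Fredholm theory: by (the proof of) Theorem~\ref{thm: QNMs}, $P_\s$ is a holomorphic family of Fredholm operators between the variable-order spaces $\bar H^{s}$ dictated by the radial point thresholds at $\Ho_e^+$ and $\Ho_c^+$, invertible for $\Im \s$ large, so $P_\s^{-1}$ is meromorphic on $\C$ with poles exactly at the quasinormal mode frequencies and finite-rank Laurent coefficients; the principal part of $P_\s^{-1}$ at $\s_j$ applied to $\hat f$ yields the generalized quasinormal modes $e^{-i\s_j t_*} v_{jk}$, with the powers $t_*^k$ coming from higher-order poles. The crucial analytic input, Step~3, is the high-energy estimate: there exist $M$ and a constant such that
\[
	\|P_\s^{-1}\|_{\bar H^{s-1} \to \bar H^{s}} \leq C \abs{\s}^{M} \quad \text{for } \Im \s \in [-\e_0, C_0], \ \abs{\Re \s} \text{ large},
\]
assembled from elliptic estimates, real principal type propagation of singularities, radial point estimates at the two horizons, and the Wunsch--Zworski estimate \cite{WZ2011} microlocalized at the normally hyperbolic trapped set. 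It is here that $\e < C$ and the threshold $s > \tfrac12 + \b\e$ (with $\b$ as in \eqref{eq: beta}) enter: the strip width is limited by the minimal normal expansion rate at the trapping, and the admissible Sobolev order at the horizon radial sets, relative to the surface gravities, decreases linearly in $-\Im\s = \e$.

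With Step~3 in hand, the conclusion follows by contour deformation. One writes $u$ via the inverse Mellin transform as $\frac{1}{2\pi}\int_{\Im\s = C_0} e^{-i\s t_*} P_\s^{-1}\hat f(\s)\, d\s$ with $C_0$ above all resonances; the integral converges in $\bar H^{s}$ because the extra regularity $f \in e^{-\e t_*}\bar H^{s-1+\de}$ produces enough decay in $\s$ of $\hat f$ to beat $\abs{\s}^M$. Shifting the contour down to $\Im\s = -\e$ (perturbing $\e$ if necessary so that no pole lies on the new line, and checking that the horizontal pieces at $\Re\s = \pm R$ vanish as $R\to\infty$ by Step~3) produces, by the residue theorem, exactly $\sum_{j=1}^N \sum_{k=0}^{k_j} t_*^k e^{-i\s_j t_*} v_{jk}$, while the remaining integral over $\Im\s = -\e$ converges to an element of $e^{-\e t_*}\bar H^s(M_*)$ --- the claimed remainder. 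Real analyticity of the $v_{jk}$ when the coefficients of $P$ are real analytic follows from real-analytic elliptic regularity for $P_\s$ at the poles, as in Theorem~\ref{thm: QNMs}.

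The main obstacle is Step~3, the uniform high-energy resolvent bound in the strip; every other step is essentially bookkeeping. This is exactly the point that the new material of the paper is designed to supply: the choice of Killing vector field $\d_{t_*} + \frac{a}{r_0^2+a^2}\d_{\phi_*}$ renders $P_\s$ classically non-trapping throughout the full subextremal range, and the geodesic analysis shows that the only semiclassical trapping is normally hyperbolic; granted these two facts, the estimate is put together exactly as in \cite{V2013} using \cite{WZ2011}. The secondary delicate point is carefully tracking the variable Sobolev orders forced by the two horizons and how they combine with the trapping threshold to give the stated condition $s > \tfrac12 + \b\e$.
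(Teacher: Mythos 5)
Your proposal is correct and follows essentially the same route as the paper: the paper's own proof simply observes that, once Lemma \ref{le: classical radial points} gives the radial-point/non-trapping structure for $P_\s$ and Theorem \ref{thm: trapping in Kerr-de Sitter} establishes normally hyperbolic trapping, the argument of \cite{V2013}*{Theorem~1.4} (Mellin transform, high-energy resolvent estimates via \cite{WZ2011}, contour deformation picking up residues at the poles of $P_\s^{-1}$) goes through verbatim. Your outline is an accurate expansion of exactly that argument, with the correct identification of the two new dynamical inputs.
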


Theorem \ref{thm: asymptotic expansion} implies that we always have decay apart from finitely many terms.
Moreover, decay for all solutions $u$ is equivalent to proving that
\[
	\Im(\s) < 0,
\]
for all $\s \in \A$. 
On the other hand, decay for all solutions to the zero resonance amounts to proving that for all $\s\in\A\setminus\{0\}$, $\Im(\s)<0$, and $\s=0$ is simple (with thus no generalized quasinormal modes).
This is what is commonly known as \emph{mode stability} for the operator $P$, which in general will depend strongly on the lower order terms. 
For this, only the relatively simple case of $|a|$ small is
known, see Dyatlov's paper \cite{D2011}: for the wave operator, if
$a=0$, this (in the second sense, corresponding to decay to constants) can be shown explicitly, and if $|a|$ is small,
it follows by perturbation stability of the Fredholm setup; for
the Klein-Gordon operator of small positive mass parameter the first (full
decay) statement holds for small $|a|$. We remark
that recently mode stability was extended to a larger range of
parameters by Casals and Texeira da Costa \cite{CTC2021}.

\subsubsection{Quasilinear wave equations}
In fact, our results immediately make the methods of \cite{HV2016} on
solving quasilinear equations
applicable in the extended range of parameters. 
That paper uses the compactification $\overline{M_*}$ of $M_*$ by
adding $e^{-t_*}$ as a boundary defining function, mentioned above, to
define the b-Sobolev spaces (which we here write as $\bar H$,
following the discussion above), and b-differential operators
$\Diffb$; here we merely state a simplified result as an illustration. Using $(t_*,y)$ as above, $\md u=(\partial_{t_*}u, \md_y u)$, for each $p=(t_*,y)$ we have an inner product $g_p(u(p),\md u(p))$ on $T_pM$, where $g_p:\R\oplus T_p^*M\to S^2T_p^*M$ depends smoothly, up to the
boundaries $r=r_e-\delta$, $r=r_c+\delta$, on $p$ via $y$ (or indeed via $(z,e^{-\tau_*})$), and we consider the quasilinear wave equation
\[
	\Box_{g(u,\md u)}u=f+q(u,\md u)
\]
for real-valued $u$,
with $q$ being a polynomial in $u,\partial_{t_*}u,\partial_y u$ with vanishing
zeroth and first order terms, and with real coefficients that are smooth
functions of $y$ (or again $(z,e^{-\tau_*})$).
The weighted version of the Sobolev spaces is simply $\bar H^{s,\alpha}$ given by $u\in\bar H^{s,\alpha}$ if $e^{\alpha t_*}u\in \bar H^s$.
For instance, Theorem~4 of \cite{HV2016}, in a simplified form
(corresponding to Theorems~1 and 2 there) but with an extended range of parameters becomes:

\begin{thm}[cf.\ Theorems~4 and 1 of \cite{HV2016}] \ \\
Let $(M_*,g_*)$ be a subextremal Kerr-de Sitter spacetime, and consider
$L_{g(u,d u)}=\Box_{g(u,\md u)}$ with $g(0,0)=g_*$. 
Suppose further that $L_0=L_{g(0,0)}$ is such that $L_0$ has a simple
resonance at $0$ (i.e.\ the associated Fredholm operator's inverse has
a simple pole), with resonant states spanned by constants, and no other resonances in $\Im \s\geq 0$, i.e.\ that mode stability in the second sense holds.
Suppose that $q(u_0, \md u_0)=0$ for $u_0$ a constant.
Let $s\in\R$.

Then, with $d=12$, for $\alpha>0$ sufficiently small: 
If $f\in\bar H^{\infty,\alpha}$ is real valued with a sufficiently
small $\bar H ^{2d,\alpha}$-norm (depending on $s$), then the equation
$L_{g(u, \md u)} u=f+q(u, \md u)$ has a unique, smooth in $M_*$, real
valued, global forward solution of the form $u=u_0+\tilde u$, $\tilde
u\in\bar H^{s,\alpha}$, $u_0=c\chi$, $c$ a constant, $\chi\in
C^\infty(M_*)$ identically $1$ for $t_*$ large.
\end{thm}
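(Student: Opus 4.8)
The plan is to reduce this quasilinear result to the linear framework of the present paper together with the Nash--Moser iteration machinery of \cite{HV2016}, showing that the only two ingredients of \cite{HV2016} that were restricted to a subrange of parameters—classical non-trapping for the Fourier-transformed operator and semiclassical normally hyperbolic trapping—are now available in the full subextremal range by Theorems \ref{thm: QNMs} and \ref{thm: asymptotic expansion} (and the trapping analysis underlying them). Concretely, the argument of \cite{HV2016} is structured as a fixed-point/Nash--Moser scheme in the scale of weighted b-Sobolev spaces $\bar H^{s,\alpha}$: one writes $u = u_0 + \tilde u$ with $u_0 = c\chi$ and linearizes $L_{g(u,\md u)}$ about the current iterate, obtaining at each step a linear operator of the form $\Box_{g(v,\md v)} + (\text{lower order})$ with coefficients that are smooth functions of $y$ (equivalently $(z,e^{-\tau_*})$) and close to those of $L_0$. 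The first step is therefore to verify that each such linearized operator falls under Assumption \ref{ass: main}: it is a wave operator $\Box + A$ for a metric of Kerr--de Sitter type whose constant-$t_*$ hypersurfaces are spacelike, with $A$ independent of $t_*$ and $\phi_*$—here one uses that $g(0,0) = g_*$ and that the perturbation is small, so spacelikeness of the level sets of $t_*$ (condition \eqref{eq: dt star timelike}) is stable, and the $t_*,\phi_*$-independence of the coefficients is built into the hypothesis on $q$ and $g$.

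Next I would invoke the linear theory: for each such linearized operator, Theorem \ref{thm: QNMs} gives a discrete resonance set and Theorem \ref{thm: asymptotic expansion} gives the asymptotic expansion, so under the mode-stability hypothesis on $L_0$ (simple resonance at $0$ with constant resonant states, no resonances in $\Im\s \geq 0$) perturbation stability of the Fredholm setup—exactly as in \cite[Section~2]{HV2016}, now valid in the full range because the underlying Fredholm estimates are—yields, for $\alpha > 0$ small, a forward solution operator $\bar H^{s-1+\de,\alpha} \to \bar H^{s,\alpha}$ for the linearized problem modulo the one-dimensional obstruction spanned by $\chi$ (absorbed into the $u_0 = c\chi$ term), with tame estimates in $s$. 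The loss of derivatives in this solution operator is the source of the fixed number $d$ (the $d=12$ is the bookkeeping constant from the normally hyperbolic trapping loss plus the radial point thresholds, as in \cite{HV2016}), and it is what forces a Nash--Moser rather than a contraction-mapping scheme. One then runs the Nash--Moser iteration of \cite{HV2016} verbatim: the hypotheses $q(u_0,\md u_0) = 0$ for constant $u_0$ and $g(0,0) = g_*$ ensure the iteration can be started at $u = u_0$, smallness of $\|f\|_{\bar H^{2d,\alpha}}$ controls the first correction, and the tame estimates close the loop to produce $\tilde u \in \bar H^{s,\alpha}$ for the given $s$; smoothness of $u$ in $M_*$ (away from the ideal boundary $t_* = \infty$) follows from local elliptic-hyperbolic regularity as usual, and uniqueness in the stated class follows from the forward nature of the solution and energy estimates.

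The main obstacle is \emph{not} in the iteration scheme itself—that is taken off the shelf from \cite{HV2016}—but in confirming that the linear inputs it requires hold uniformly for the family of perturbed operators in the full subextremal range. Two points need care. First, \cite{HV2016} needs the linear estimates with \emph{uniform} constants over a neighborhood of $g_*$ in the relevant coefficient topology; this requires that the classical non-trapping property established here (via the choice of Killing vector field tied to $r_0$ with $\mu'(r_0)=0$) and the semiclassical normally hyperbolic trapping are \emph{open} conditions—the former because non-trapping of a real-principal-type flow near radial points is stable, the latter because the $r$-normally hyperbolic condition of \cite{WZ2011} is stable under $C^\infty$ perturbations—and one should remark that the dynamical arguments of the present paper, being essentially at the level of the Hamilton flow, carry over to metrics $C^\infty$-close to $g_*$ of the same structural form. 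Second, one must check that the distinguished quantity $r_0$ and hence the Killing vector field $\partial_{t_*} + \frac{a}{r_0^2+a^2}\partial_{\phi_*}$ used to define resonances varies smoothly with the perturbation, or, alternatively, observe that the iteration only ever linearizes about metrics with the \emph{same} asymptotic data at the horizons so that a fixed choice of $r_0$ (that of $g_* = g(0,0)$) suffices throughout; this is the natural route and avoids any issue of a moving Fredholm framework. Once these openness and uniformity statements are in place, the theorem follows by citing the corresponding theorems of \cite{HV2016} with the enlarged parameter range substituted in.
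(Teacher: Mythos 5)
Your proposal takes essentially the same route as the paper, which in fact offers no proof of this theorem beyond the remark that the Nash--Moser framework of \cite{HV2016} applies verbatim once the two parameter restrictions of \cite{V2013} (classical non-trapping of $P_\s$ and semiclassical normally hyperbolic trapping) are removed, which is exactly what Theorems \ref{thm: Fredholm} and \ref{thm: trapping in Kerr-de Sitter} accomplish; your observation that a fixed $r_0$ (that of $g_*$) suffices throughout the iteration is also the intended reading. One imprecision worth correcting: the linearized operators in the iteration do \emph{not} literally satisfy Assumption \ref{ass: main}, since the iterates depend on $t_*$ and hence so do the coefficients of $\Box_{g(u,\md u)}$; the correct statement is that these are b-operators on the compactification $\overline{M_*}$ whose \emph{normal operators} at the ideal boundary $e^{-t_*}=0$ are stationary and fall under the present framework, with the decaying non-stationary part handled by the perturbative b-analysis of \cite{HV2016} — your later openness discussion is what actually carries the argument, not the verification of Assumption \ref{ass: main} for each linearization.
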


\section{The Fredholm setup}\label{sec:Fredholm}

The goal of this section is to prove Theorem \ref{thm: QNMs}.
This generalizes \cite{V2013} by \emph{removing} the assumption
\begin{equation} \label{eq: old condition 1}
    \left(1 - \frac{\Lambda a^2}3\right)^3 > 9 \Lambda m^2,
\end{equation}
which was required in \cite{V2013}.
In fact, \eqref{eq: old condition 1} was stated in \cite{V2013}*{(6.13)} as the equivalent condition
\[
	r_0 \in (r_e, r_c), \quad \mu'(r_0) = 0 \quad \Rightarrow \quad a^2 < \mu(r_0),
\]
i.e.\ that the maximum point of $\mu$ in the domain of outer communication is larger than $a^2$.
This will not be necessary in the analysis we present below.
Vasy considered in \cite{V2013} the operator
\[
    \hat P_\s u := e^{i \s t_*} P\left(e^{- i \s t_*} u \right),
\]
for $\s \in \C$, where $t_*$ is as above (or a slight modification with similar properties), and $u$ only depends on the remaining coordinates $(\phi_*, r, \theta)$.
This corresponds to the condition
\[
	\d_{t_*} u = 0.
\]
One may therefore consider $\hat P_\s$ as a linear second order differential operator
\[
	\hat P_\s : C^\infty(L_*) \to C^\infty(L_*),
\]
where
\[
	L_* := \{t_* = 0\} \times (r_e - \de, r_c + \de)_r \times S^2_{\psi_*, \theta} \subset M_*.
\]
In \cite{V2013} it was shown that $\hat P_\s$ is a Fredholm operator between appropriate Sobolev spaces, \emph{assuming} \eqref{eq: old condition 1}.
If \eqref{eq: old condition 1} is violated, the Fredholm theory of \cite{V2013} does not apply to $\hat P_\s$.
The main reason for this is that there are trapped bicharacteristics of $\hat P_\s$ in $T^*(L_* \cap M)$ if $a$ is too large.
This is closely related to the fact that the ergoregions of the event horizon and cosmological horizon intersect for large $a$.
In order to avoid this, we will construct another operator $P_\s$, which has similar properties in the full subextremal range as $\hat P_\s$ has for $a$ satisfying \eqref{eq: old condition 1}.
For this, we first introduce a new coordinate system $(\tau_*, r, \psi_*, \theta)$, where
\begin{equation} \label{eq: star coordinates}
	\begin{pmatrix}
	    \tau_* \\
		\psi_*
	\end{pmatrix} 
		:= 
	\begin{pmatrix}
	    t_* \\
		\phi_* - \frac a{r_0^2 + a^2} t_*
	\end{pmatrix},
\end{equation}
with $r_0 \in (r_e, r_c)$ being uniquely defined by
\[
    \mu'(r_0) = 0.
\]
Note that
\[
    \d_{\tau_*}
        = \d_{t_*} + \frac a {r_0^2 + a^2} \d_{\phi_*}, \quad
    \d_{\psi_*}
        = \d_{\phi_*}
\]
are both Killing vector fields, since $a$ and $r_0$ are constant.
A better choice than $\hat P_\s$ is to consider the operator
\[
    P_\s u 
    	:= e^{i \s \tau_*} P\left(e^{- i \s \tau_*} u \right)
    	= e^{i \s t_*} P\left(e^{- i \s t_*} u \right),
\]
where $u$ only depends on coordinates $(\psi_*, r, \theta)$, i.e.
\begin{equation} \label{eq: partial tau star}
    \d_{\tau_*} u = \left( \d_{t_*} + \frac a {r_0^2 + a^2} \d_{\phi_*} \right) u = 0.
\end{equation}
Even though $\tau_* = t_*$, we get a quite different induced operator $P_\s$ on the modes, assuming $\d_{\tau_*} u = 0$ instead of $\d_{t_*}u = 0$.
We conclude that
\[
    P_\s: C^\infty(L_*) \to C^\infty(L_*)
\]
is a linear differential operator of second order, where we note that indeed
\[
	L_* = \{{\tau_*} = 0\} \times (r_e - \de, r_c + \de)_r \times S^2_{\psi_*, \theta}.
\]
The key difference to $\hat P_\s$ is that the there are no trapped bicharacteristics of $P_\s$ in $T^*(L_* \cap M)$ for any parameters in the subextremal range, see Lemma \ref{le: bicharacteristics} below.

This will be used to prove that $P_\s$ is a Fredholm operator between appropriate spaces, which is the main step in proving Theorem \ref{thm: QNMs}.
In order to formulate the Fredholm statement, define $\b_e, \b_c \in \R$ as
\[
	\b_{e/c} := \pm 2 \left(1 + \frac{\Lambda a^2}3\right) \frac{r^2 + a^2}{\mu'}|_{r = r_{e/c}}.
\]
Since $\mu'(r_e) > 0$ and $\mu'(r_c) < 0$, we note that $\b_{e/c} > 0$.
For each $s \in \R$, we use the notation
\[
    \bar H^s := \bar H^s\left(L_*\right),
\]
for the extendible Sobolev distributions on $L_*$, in the sense of H\"ormander \cite{HorIII}.
We will prove the following modification of \cite{V2013}*{Theorem~1.1}, which holds in the full subextremal range:
\begin{thm} \label{thm: Fredholm}
Define
\begin{equation}\label{eq: beta}
	\b := \max(\b_e, \b_c) > 0
\end{equation}
and let $s \geq \frac12$.
The operator
\[
    P_\s: \{u \in \bar H^s \mid P_\s u \in \bar H^{s-1} \} \to \bar H^{s-1}
\]
is an analytic family of Fredholm operator of index $0$ for all $\s \in \C$ such that
\[
	\Im \s > \frac{1 - 2s}{2\b}.
\]
Moreover, $P_\s$ is invertible for $\Im \s \gg 1$.
\end{thm}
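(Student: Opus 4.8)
The plan is to reduce Theorem~\ref{thm: Fredholm} to the microlocal Fredholm machinery of \cite{V2013}, checking that the operator $P_\s$ in the new coordinates $(\tau_*, \psi_*)$ has exactly the structure required by that framework. Concretely, one works on the compact manifold-with-boundary obtained by gluing in the two spacelike hypersurfaces at $r = r_e - \de$ and $r = r_c + \de$, and one analyzes the family $P_\s$ as a non-elliptic second-order operator on $L_*$. The four ingredients needed for the semi-Fredholm estimates are: (i) real principal type propagation of singularities in the interior of the characteristic set; (ii) the radial point estimates at the two horizons $r = r_e$ and $r = r_c$, which is where the threshold condition $\Im\s > \frac{1-2s}{2\b}$ enters, with $\b_{e/c}$ the relevant subprincipal/radial-point quantities computed from $\mu'$ at the roots; (iii) complex absorption or, better, the hyperbolic (spacelike boundary) estimates at $r = r_e - \de$ and $r = r_c + \de$ coming from the fact that those hypersurfaces are spacelike and one solves toward/away from them; and (iv) the absence of trapped bicharacteristics in $T^*(L_* \cap M)$, which is precisely what Lemma~\ref{le: bicharacteristics} supplies for the full subextremal range. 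Together with the analogous estimates for the adjoint $P_\s^*$, these give that $P_\s$ is Fredholm; the index is $0$ by a standard deformation argument, e.g.\ deforming $\s$ continuously (the index is constant on the connected region $\Im\s > \frac{1-2s}{2\b}$) and using the invertibility for $\Im\s \gg 1$.

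First I would set up the function spaces and the ``doubling'' or extension across the spacelike boundaries exactly as in \cite{V2013}*{Section~2--3}, so that the variable-order issues do not arise (here we keep $s$ constant). Then I would verify the principal symbol computation: the characteristic set of $P_\s$ over $M$ is a smooth conic hypersurface away from the zero section, $\d_{t_*}$-type null covectors are absent because the $\{\tau_* = \text{const}\}$ slices are spacelike (Remark~\ref{rmk: t star hypersurfaces}), and the horizons $r = r_{e/c}$ appear as radial sets (sources/sinks of the rescaled Hamilton flow) where the linearization in the normal direction has the sign dictated by $\mu'(r_e) > 0$, $\mu'(r_c) < 0$. The radial point estimates of \cite{V2013}*{Propositions~2.3--2.4} then apply verbatim, yielding the threshold $s - \frac12 > -\b_{e/c}\,\Im\s$, i.e.\ $\Im\s > \frac{1-2s}{2\b}$ with $\b = \max(\b_e,\b_c)$. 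Propagation of singularities moves estimates from the radial sets along bicharacteristics, and one closes the estimate \emph{because} there is no trapping (Lemma~\ref{le: bicharacteristics}): every bicharacteristic in $T^*(L_* \cap M)$ runs from one radial set or spacelike boundary to another in finite time.

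The step I expect to be the main obstacle is the careful recomputation of the operator $P_\s$ and its principal and subprincipal symbols in the $(\tau_*, \psi_*)$ coordinates, and in particular re-deriving the radial-point quantities $\b_{e/c}$ and confirming that the new Killing vector field $\d_{\tau_*} = \d_{t_*} + \frac{a}{r_0^2+a^2}\d_{\phi_*}$ does not spoil the source/sink structure at the horizons. One must check that the null-bicharacteristic flow, after the shift by this vector field, still has the horizons as generalized radial sets with the correct sign of the radial expansion rate (so that the sink/source dichotomy and hence the direction of propagation is preserved), and that the subprincipal symbol contribution at those sets gives exactly $\b_{e/c}$ as defined. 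This is a direct but somewhat lengthy computation; it is essentially the content of making \cite{V2013} work in the shifted frame, and once it is in place, everything else — Fredholmness, index~$0$, and invertibility for $\Im\s \gg 1$ (via a large-parameter elliptic/semiclassical estimate, since for $\Re\s$ bounded and $\Im\s$ large the operator is elliptic in the relevant sense) — follows from the cited results together with Lemma~\ref{le: bicharacteristics}. Analyticity of the family in $\s$ is immediate since $P_\s$ depends polynomially on $\s$.
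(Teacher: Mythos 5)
Your proposal is correct and follows essentially the same route as the paper: the paper's proof likewise reduces everything to the Fredholm framework of \cite{V2013}, with the non-trapping statement (Lemma~\ref{le: bicharacteristics}) and the verification that the horizons remain generalized normal source/sink radial sets for the shifted flow (Lemma~\ref{le: classical radial points}) supplying exactly the ingredients you flag as the main obstacle. The threshold $\Im\s > \frac{1-2s}{2\b}$, the index-zero deformation, and the large-$\Im\s$ invertibility are all obtained as you describe, by invoking the proof of Theorem~1.4 of \cite{V2013}.
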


The proof of this theorem follows from the Fredholm framework for non-elliptic operators, developed in \cite{V2013}, once we have established the necessary results for the bicharacteristics, described above.
We begin by studying the bicharacteristics in the domain of outer communication $M$ and it will be convenient to do the computations in a modification of the Boyer-Lindquist coordinates $(t, r, \phi, \theta)$.
We define these new coordinates analogous to \eqref{eq: star coordinates} as
\[
	\begin{pmatrix}
		\tau \\
		\psi
	\end{pmatrix} 
		:= 
	\begin{pmatrix}
	    t \\
		\phi - \frac a{r_0^2 + a^2} t
	\end{pmatrix}.
\]
Since
\[
    \d_\tau = \d_t + \frac a{r_0^2 + a^2} \d_\phi = \left(\d_{t_*} + \frac a{r_0^2 + a^2} \d_{\phi_*}\right)|_M = \d_{\tau_*}|_M,
\]
we may identify the restriction of $P_\s$ to $L_* \cap M$ with the operator
\[
	e^{- i \s \tau}P \left( e^{i \s \tau} u \right),
\]
where $u \in C^\infty(M)$, such that
\[
    \d_\tau u = 0.
\]
We may therefore identify $P_\s|_{L_* \cap M}$ with the linear the second order differential operator 
\[
	P_\s : C^\infty(L) \to C^\infty(L),
\]
where 
\[
    L := \{\tau = 0\} \times (r_e, r_c)_r \times S^2_{\psi, \theta}.
\]
In the new coordinates, the dual metric $G$ is given by
\begin{align*}
	(r^2 + a^2\cos^2(\theta)) G
		&= \mu(r) \d_r^2 + c(\theta) \d_\theta^2 \\*
		&\qquad + \frac{b^2}{c(\theta)\sin^2(\theta)} \left(a \sin^2(\theta) \d_\tau + \frac{r_0^2 + a^2 \cos^2(\theta)}{r_0^2 + a^2} \d_\psi\right)^2 \\*
		&\qquad - \frac{b^2}{\mu(r)} \left( (r^2 + a^2) \d_\tau + a \frac{r_0^2 - r^2}{r_0^2 + a^2} \d_\psi \right)^2.
\end{align*}
The principal symbol $\p_\s$ of $P_\s$ is thus given by
\begin{align}
	&(r^2 + a^2\cos^2(\theta))\p_\s(\xi) \nonumber \\
		&\quad =  \mu(r) \xi_r^2 + c(\theta) \xi_\theta^2 + \frac{b^2}{(r_0^2 + a^2)^2} \left( \frac{(r_0^2 + a^2 \cos^2(\theta))^2}{c(\theta) \sin^2(\theta)} - a^2 \frac{(r_0^2 - r^2)^2}{\mu(r)} \right) \xi_\psi^2. \label{eq: principal symbol classical}
\end{align}
Since the bicharacteristic flow is invariant under conformal rescaling, it suffices to consider
\[
   \q_\s
        := (r^2 + a^2\cos^2(\theta))\p_\s.
\]
\begin{lemma}[No characteristic set at $r_0$] \label{le: no bich r 0}
We have
\[
	\Char(P_\s) \subset \{ r \neq r_0 \}.
\]
\end{lemma}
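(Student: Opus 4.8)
The plan is to prove the stronger assertion that the principal symbol is \emph{elliptic} along $\{r = r_0\}$, i.e.\ that $\q_\s(\xi) \neq 0$ for every nonzero covector $\xi$ over a point with $r = r_0$; since $(r^2 + a^2\cos^2(\theta)) > 0$ there, this is the same as ellipticity of $\p_\s$, and it clearly implies $\Char(P_\s) \cap \{r = r_0\} = \emptyset$. Observe first that, by \eqref{eq: principal symbol classical}, $\q_\s$ does not actually depend on $\s$, so this is a purely classical, Riemannian-type statement. The mechanism is simple: the only term in $\q_\s$ that can be negative is $-a^2(r_0^2 - r^2)^2/\mu(r)$ inside the coefficient of $\xi_\psi^2$, and it vanishes identically at $r = r_0$. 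Substituting $r = r_0$ and working in the chart $\theta \in (0,\pi)$, what remains is
\[
	\q_\s(\xi)\big|_{r = r_0} = \mu(r_0)\,\xi_r^2 + c(\theta)\,\xi_\theta^2 + \frac{b^2}{(r_0^2 + a^2)^2}\,\frac{(r_0^2 + a^2\cos^2(\theta))^2}{c(\theta)\sin^2(\theta)}\,\xi_\psi^2 .
\]
Here $\mu(r_0) > 0$ because $r_0$ lies in the domain of outer communication $(r_e, r_c)$, where $\mu > 0$ (indeed $r_0$ is the maximum of $\mu$ there), while $b > 0$ and $c(\theta) > 0$ always; hence all three coefficients are strictly positive and the expression vanishes only at $\xi = 0$. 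This settles the claim over $M$, that is, away from the poles $\theta = 0, \pi$.

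To cover the poles, where the coordinates $(\psi, \theta)$ degenerate and the displayed formula has a spurious singularity, I would pass to a coordinate-free description of $\p_\s$. Since $P_\s$ arises from $P = \Box + A$ by conjugation with $e^{-i\s\tau_*}$ followed by restriction to functions with $\d_{\tau_*} u = 0$, its principal symbol is the restriction of the dual metric $G$ to the hyperplane $\Ann(\d_{\tau_*}) \subset T^*_pM_*$ annihilating the Killing field $\d_{\tau_*}$, which we identify with $T^*_pL_*$. For a Lorentzian metric, the restriction of the dual metric to the annihilator of a timelike vector is positive definite --- it is just the dual of the Riemannian metric induced on the orthogonal complement of that vector. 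So it suffices to verify that $\d_{\tau_*} = \d_t + \frac{a}{r_0^2 + a^2}\d_\phi$ is timelike at $r = r_0$. Inserting this vector into the Boyer--Lindquist form of $g$, the term proportional to $(r_0^2 - r^2)^2$ again drops out at $r = r_0$, leaving
\[
	g(\d_{\tau_*}, \d_{\tau_*})\big|_{r = r_0} = -\,\frac{\mu(r_0)\,(r_0^2 + a^2\cos^2(\theta))}{b^2(r_0^2 + a^2)^2} < 0 ,
\]
valid at the poles as well. Hence $\p_\s$ is elliptic along the whole slice $\{r = r_0\}$, and the lemma follows.

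I do not expect any real obstacle here: morally, $r_0$ is \emph{defined} (via $\mu'(r_0) = 0$, equivalently as the maximum point of $\mu$ in the domain of outer communication) precisely so that $\d_{\tau_*}$ is timelike there, and both computations above collapse because of the factor $r_0^2 - r^2$ introduced by the choice \eqref{eq: star coordinates} of $\tau_*$. The only thing needing a little care is the bookkeeping at the poles, which is why the coordinate-free reformulation is convenient; alternatively one can check directly that $c(\theta)\xi_\theta^2 + \frac{(r_0^2 + a^2\cos^2(\theta))^2}{c(\theta)\sin^2(\theta)}\xi_\psi^2$ extends to a positive definite quadratic form in suitable Cartesian-type local coordinates on $S^2$ near each pole.
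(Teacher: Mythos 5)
Your argument is correct and coincides with the paper's own proof: one substitutes $r = r_0$ into $\q_\s$, the only indefinite term (the one proportional to $(r_0^2 - r^2)^2/\mu$) drops out, and what remains is a sum of nonnegative terms with strictly positive coefficients since $\mu(r_0) > 0$, forcing $\xi = 0$. Your supplementary discussion — the bookkeeping at the poles and the coordinate-free reformulation via timelikeness of $\partial_{\tau_*}$ along $\{r = r_0\}$ — is a legitimate refinement and is precisely the content of the paper's remark on ergoregions immediately following the lemma, so no further comparison is needed.
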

\begin{proof}
Assume that there is a point in the characteristic set with $r = r_0$.
Then
\begin{align*}
	0 
	    = \q_\s(\xi)|_{r = r_0}
		= \mu(r_0) \xi_r^2 + c(\theta) \xi_\theta^2 + \frac{b^2}{(r_0^2 + a^2)^2} \frac{(r_0^2 + a^2 \cos^2(\theta))^2}{c(\theta) \sin^2(\theta)} \xi_\psi^2,
\end{align*}
and since $\mu(r_0) > 0$, this is a sum of positive terms and hence
\[
	\xi_r = \xi_\theta = \xi_\psi = 0.
\]
This proves the lemma.
\end{proof}

\begin{remark}[Ergoregions]
One way of interpreting the above lemma is that $\d_\tau$ is \emph{timelike} at $r_0$. 
Hence the ergoregions of the event horizon and cosmological horizon, with respect to $\d_\tau$, are disjoint.
It is easy to see that this is not the case for $\d_t$ in general; the ergoregions may well intersect in that case.
\end{remark}

\begin{lemma}[No trapping] \label{le: bicharacteristics}
For each $\e > 0$, all bicharacteristcs of $P_\s$ in $L$ leave the region
\[
	(r_e + \e, r_c - \e)_r \times S^2_{\psi, \theta}
\]
both to the future and past.
\end{lemma}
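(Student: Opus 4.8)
The strategy is to use the $r$-component of the Hamilton flow of $\q_\s$ as a monotonicity/escape function on the characteristic set over $L$. Write $\q_\s = \mu(r)\xi_r^2 + c(\theta)\xi_\theta^2 + W(r,\theta)\xi_\psi^2$, where
\[
 W(r,\theta) = \frac{b^2}{(r_0^2+a^2)^2}\left(\frac{(r_0^2+a^2\cos^2\theta)^2}{c(\theta)\sin^2\theta} - a^2\frac{(r_0^2-r^2)^2}{\mu(r)}\right).
\]
Along a bicharacteristic, $\dot r = \partial_{\xi_r}\q_\s = 2\mu(r)\xi_r$ and $\dot\xi_r = -\partial_r\q_\s = -\mu'(r)\xi_r^2 - \partial_r W\,\xi_\psi^2$. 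I would first compute $\frac{d}{ds}(\mu(r)\xi_r) = \dot\mu\,\xi_r + \mu\dot\xi_r$. Using $\dot\mu = \mu'(r)\dot r = 2\mu\mu'\xi_r$ and the formula for $\dot\xi_r$, one gets
\[
 \tfrac{d}{ds}\bigl(\mu(r)\xi_r\bigr) = \mu\mu'\xi_r^2 - \mu\,\partial_r W\,\xi_\psi^2.
\]
On the characteristic set $\q_\s=0$ we may substitute $\mu\xi_r^2 = -c(\theta)\xi_\theta^2 - W\xi_\psi^2$, so this becomes $-\mu'\bigl(c\xi_\theta^2 + W\xi_\psi^2\bigr) - \mu\,\partial_r W\,\xi_\psi^2$. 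The point of the clever choice of $r_0$ (hence of $\psi$) should be that the $\xi_\psi^2$ terms, $-\mu' W - \mu\,\partial_r W = -\partial_r(\mu W)$, combine favorably, and that $\mu W$ is in fact monotone in $r$ (it should vanish to the right order at $r_e, r_c$ and the subtracted term $-a^2(r_0^2-r^2)^2$ is arranged to vanish at $r=r_0$); I expect $\partial_r(\mu W)$ to have a definite sign relationship that, together with the factor $-\mu'(r)$, makes $\frac{d}{ds}(\mu\xi_r)$ of a single sign (up to a term controlled away from $r_0$) on each of the subintervals $(r_e,r_0)$ and $(r_0,r_c)$, using $\mu'>0$ on the former and $\mu'<0$ on the latter.

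Concretely I would argue as follows. By Lemma~\ref{le: no bich r 0} no bicharacteristic over $L$ meets $\{r=r_0\}$, so each bicharacteristic stays in $\{r<r_0\}$ or $\{r>r_0\}$. Consider the function $p := \mu(r)\xi_r$. On $\{r>r_0\}$, where $\mu'<0$, and using $\q_\s=0$, the computation above gives $\dot p = |\mu'|\,(c\xi_\theta^2 + W\xi_\psi^2) - \mu\,\partial_r W\,\xi_\psi^2$. Since $c(\theta)\xi_\theta^2 + W\xi_\psi^2 = -\mu\xi_r^2 = p\,\xi_r/(-1)\cdot$... rather: $c\xi_\theta^2 + W\xi_\psi^2 = -\mu\xi_r^2$, which is $\geq 0$ only when $\mu\leq 0$; but over $L$ we have $r\in(r_e,r_c)$ so $\mu>0$ and this quantity is $\leq 0$. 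Hence on $\{r>r_0\}\cap\{\text{char set}\}$ one has $c\xi_\theta^2+W\xi_\psi^2 = -\mu\xi_r^2 \leq 0$, forcing $W\xi_\psi^2 \leq 0$, and then $\dot p = |\mu'|(-\mu\xi_r^2) - \mu\,\partial_r W\,\xi_\psi^2$. I then need the sign of $\partial_r W$: from the explicit $W$, $\partial_r W = \frac{b^2 a^2}{(r_0^2+a^2)^2}\,\partial_r\!\bigl(\frac{(r_0^2-r^2)^2}{\mu(r)}\bigr)$ times $-1$, and on $(r_0,r_c)$ this derivative should be negative (numerator $(r_0^2-r^2)^2$ increasing, $\mu$ decreasing), so $\partial_r W < 0$ there, giving $-\mu\,\partial_r W\,\xi_\psi^2 \geq 0$; combined with the nonpositive first term the sign of $\dot p$ is not yet definite, so I would instead track $\dot p$ against $p$ itself, aiming to show $p$ is strictly monotone or that $\{p=0\}$ on the char set over $(r_0,r_c)$ forces escape. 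In fact the cleanest route: show that on the characteristic set over $(r_e,r_c)$ the radial momentum $\xi_r$ is nonzero wherever $\dot r$ could vanish — i.e. there are no critical points of $r$ along bicharacteristics except possibly where the flow is already leaving — by proving $\dot r = 0 \Rightarrow \ddot r \neq 0$ with a sign forcing $r$ toward the nearer of $r_e$, $r_c$.

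The main obstacle I anticipate is precisely pinning down this sign: one must verify that the combination $-\partial_r(\mu W)$ (the coefficient of $\xi_\psi^2$ in $\dot p$) does not spoil monotonicity, and this is exactly where the choice $\mu'(r_0)=0$ enters — it makes $\mu W$ (equivalently the subtracted ergoregion term) behave monotonically on each side of $r_0$, so that the "bad" potential term has the same sign as the "good" kinetic term $-\mu'\mu\xi_r^2$ throughout $(r_e,r_0)$ and throughout $(r_0,r_c)$ separately. Once $\dot p$ has a definite sign on the characteristic set over each subinterval (strictly so away from the $\xi=0$ set, which is not in the char set by Lemma~\ref{le: no bich r 0} and homogeneity), standard ODE comparison shows $r$ is eventually monotone along any bicharacteristic and hence, being confined to a bounded $r$-interval, must exit $(r_e+\e, r_c-\e)_r\times S^2$ in both time directions. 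I would also remark that the $\theta$-motion is harmless: $c(\theta)>0$ and the $\theta$-part of $\q_\s$ is a positive-definite "spherical Laplacian" piece, so it cannot obstruct the radial escape, and the poles $\theta=0,\pi$ are handled as usual by noting $\xi_\psi\to 0$ there (the coefficient $W$ blows up, but the char-set condition forces $W\xi_\psi^2$ bounded, hence $\xi_\psi^2/\sin^2\theta$ bounded).
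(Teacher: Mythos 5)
Your strategy is the right one and essentially the paper's: the lemma is proved by showing that $r$ (equivalently $p=\mu\xi_r=\tfrac12\H_{\q_\s}r$) serves as a convex escape function on the characteristic set over each of the two invariant regions $\{r<r_0\}$, $\{r>r_0\}$ singled out by Lemma~\ref{le: no bich r 0}. But your writeup stops exactly at the decisive point: you name the quantity $-\d_r(\mu W)$ whose sign must be determined, declare it ``the main obstacle I anticipate,'' and never determine it; worse, your concrete attempt regroups the terms as $\mu\mu'\xi_r^2-\mu\,\d_r W\,\xi_\psi^2$ and concludes that ``the sign of $\dot p$ is not yet definite,'' after which you retreat to the unexecuted alternative $\dot r=0\Rightarrow\ddot r\neq 0$. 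That is a genuine gap, not a presentational one, because the entire content of the lemma (and the reason for the choice $\mu'(r_0)=0$) lives in this sign.

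The computation does close, and in either of two ways. With your grouping made favorable, on the characteristic set
\[
	\dot p \;=\; -\mu'(r)\,c(\theta)\,\xi_\theta^2\;-\;\d_r\bigl(\mu W\bigr)\,\xi_\psi^2,
	\qquad
	\d_r(\mu W)=\frac{b^2}{(r_0^2+a^2)^2}\Bigl(\mu'(r)\,\frac{(r_0^2+a^2\cos^2\theta)^2}{c(\theta)\sin^2\theta}+4a^2r\,(r_0^2-r^2)\Bigr),
\]
and both $\mu'$ and $r_0^2-r^2$ change sign precisely at $r_0$, so both coefficients of $\xi_\theta^2$ and $\xi_\psi^2$ are strictly negative on $(r_e,r_0)$ and strictly positive on $(r_0,r_c)$; the degenerate case $\xi_\theta=\xi_\psi=0$ forces $\mu\xi_r^2=0$ on the characteristic set and hence $\xi=0$, so it does not occur. (The paper instead evaluates only at $\xi_r=0$, where $\H_{\q_\s}^2r$ is a positive multiple of $\d_r\frac{(r_0^2-r^2)^2}{\mu}\,\xi_\psi^2$, whose sign follows from the same two monotonicity facts, and then globalizes with the escape function $\E=e^{C(r-r_0)^2}\H_{\q_\s}r$, $C\gg1$.) You should also say a word more than ``standard ODE comparison'' for the escape step: by homogeneity one restricts to the compact set $\Char(\q_\s)\cap\{|\xi|=1\}$ over $[r_e+\e,r_c-\e]\times S^2$, on which $\dot p$ (or $\H_{\q_\s}\E$) is bounded away from $0$ with the sign of $r-r_0$, while $p$ is bounded there, so every bicharacteristic exits in finite time in both directions. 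Your final remark about the poles is unnecessary once one notes, as above, that $W\to+\infty$ as $\sin\theta\to0$ merely reinforces the sign; no separate treatment of $\theta=0,\pi$ is needed.
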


\begin{proof}
The Hamiltonian vector field is given by
\[
	\H_{\q_\s} = \sum_{j = 1}^3 (\d_{\xi_j}\q_\s)\d_j - (\d_j \q_\s)\d_{\xi_j}.
\]
We claim that 
\begin{equation}\label{eq: classical Hamilton implication}
	\H_{\q_\s} r = 0 \quad \Rightarrow \quad 	\H_{\q_\s}^2 r \
	\begin{cases}
		< 0, \quad \text{ if } r_e < r < r_0, \\
		> 0, \quad \text{ if } r_0 < r < r_c,
	\end{cases}
\end{equation}
in the characteristic set.
Assume therefore that
\begin{align*}
	0
		&= \H_{\q_\s} r = \d_{\xi_r} \q_\s = 2 \xi_r \mu(r).
\end{align*}
Since $\mu(r) > 0$ for all $r \in (r_e, r_c)$, we conclude that $\xi_r = 0$.
At such points, the second derivative is given by
\begin{align*}
	\H_{\q_\s}^2 r|_{\xi_r = 0}
		&= 2 \left( \H_{q_\s} \xi_r|_{\xi_r = 0} \right) \mu(r) \\
		&= - 2 \left( \d_r \q_\s|_{\xi_r = 0} \right) \mu(r) \\
		&= 2 \frac{b^2}{(r_0^2 + a^2)^2}  a^2 \d_r \frac{(r_0^2 - r^2)^2}{\mu(r)} \xi_\psi^2 \\
		&= 2 \frac{a^2 b^2}{(r_0^2 + a^2)^2} \frac{\mu(r) \d_r (r_0^2 - r^2)^2 - \mu'(r)(r_0^2 - r^2)^2}{\mu(r)^2} \xi_\psi^2.
\end{align*}
Now, with our choice of $r_0$, we note that 
\[
	\mu'(r) 
	   \begin{cases} 
	       > 0, \quad r_e < r < r_0, \\ 
	       < 0, \quad r_0 < r < r_c,
	   \end{cases}
\]
and
\[
	\d_r (r_0^2 - r^2)^2 
		\begin{cases} 
			< 0, \quad r_e < r < r_0, \\
			> 0, \quad r_0 < r < r_c.
		\end{cases}
\]
Finally, $(r_0^2 - r^2)^2 > 0$ for $r \in (r_e, r_c) \backslash \{r_0\}$ and $\mu(r) > 0$ for all $r \in (r_e, r_c)$, proving \eqref{eq: classical Hamilton implication}.
We may use this to define an escape function
\[
	\E := e^{C(r - r_0)^2}\H_{\q_\s} r
\]
for any $C > 0$ and note that
\[
	\H_{\q_\s} \E
		= e^{C(r - r_0)^2} \left( 2 C(r - r_0) \left(\H_{\q_{\s}}r\right)^2 +  \H_{\q_\s}^2 r \right).
\]
Since the characteristic set is disjoint from $\{r = r_0\}$ and by \eqref{eq: classical Hamilton implication}, we can choose $C$ large enough so that $\H_{\q_\s} \E$ is everywhere non-vanishing and has the same sign as $r - r_0$.
Hence $\E$ gives an escape function for the bicharacteristics of $\q_\s$ in $L$, which finishes the proof.
\end{proof}
We now improve on Lemma \ref{le: bicharacteristics} by including the precise behavior at and beyond the horizons.
By Assumption \ref{ass: main}, the hypersurface $L_*$ is spacelike, which means that $\md t_*$ is a future pointing timelike one-form everywhere along $L_*$.
All elements of the characteristic set of $P$ are lightlike, so we can use this fact to divide the characteristic set of $P$ into two disjoint sets.
Moreover, since there is a natural embedding
\[
	\Char(P_\s) \subseteq \Char(P),
\]
we may divide the characteristic set of $P_\s$ as
\[
	\Char(P_\s)
		= \S_+ \cup \S_-, 
\]
where 
\[
	\S_\pm
		= \{ \xi \in \Char(P_\s) \mid \pm G_*(\md t_*, \xi) > 0 \},
\]
where $G_*$ is the dual metric induced by $g_*$, defined in \eqref{eq: metric at horizons}.
Note that indeed
\[
	\S_+ \cap \S_-
		= \emptyset,
\]
hence $\S_+$ and $\S_-$ are invariant under the bicharacteristic flow.
\begin{lemma} \label{le: classical radial points} 
Each bicharacteristic of $P_\s$ in $\S_+$ either starts at fiber infinity of
\[
	N^*\{r = r_e\} \cap \{\xi_r > 0\}
\]
and ends at $r = r_e - \de$ or starts at the fiber infinity of
\[
	N^*\{r = r_c\} \cap \{\xi_r < 0\}
\]
and ends at $r = r_c + \de$.
The reverse behavior holds for $\S_-$.
Moreover, the fiber infinity of
\[
	N^*\{r = r_e\} \cap \{\pm \xi_r > 0\} \text{ and } N^*\{r = r_c\} \cap \{\mp \xi_r > 0\}
\]
are generalized normal source/sink manifolds of the bicharacteristic flow, respectively, in the sense of \cite{V2013}.
\end{lemma}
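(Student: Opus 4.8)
The plan is to analyze the Hamilton flow of $\q_\s$ near the horizons $r = r_e$ and $r = r_c$, where Lemma~\ref{le: bicharacteristics} no longer controls things because the coefficient $\mu(r)$ degenerates. First I would work in the radially compactified cotangent bundle over the extended manifold $L_*$, so that ``fiber infinity'' of $N^*\{r = r_e\}$ and $N^*\{r = r_c\}$ makes sense as submanifolds of $\partial\overline{T^*}L_*$. The conormal bundles $N^*\{r = r_{e/c}\}$ are spanned by $\xi_r$ with $\xi_\psi = \xi_\theta = 0$; one checks directly from \eqref{eq: principal symbol classical} that these are contained in $\Char(P_\s)$ precisely because $\mu(r_{e/c}) = 0$. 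The two halves $\{\pm\xi_r > 0\}$ lie in $\S_\pm$ or $\S_\mp$ according to the sign of $G_*(\md t_*, \xi)$; since $\md t_*$ is future timelike along $L_*$ (Assumption~\ref{ass: main}) and the horizons are the roots of $\mu$, computing $G_*(\md t_*, \cdot)$ on $N^*\{r = r_{e/c}\}$ from the dual metric \eqref{eq: metric at horizons} pins down which half goes into $\S_+$: this is exactly the bookkeeping that gives the stated pairing (event horizon with $\xi_r > 0$, cosmological horizon with $\xi_r < 0$ in $\S_+$, reversed in $\S_-$).

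Next I would establish the source/sink (radial point) structure. The key computation is $\H_{\q_\s} r = 2\mu(r)\xi_r$, so on $N^*\{r = r_{e/c}\}$ we have $\H_{\q_\s} r = 0$; to see that these conormals are invariant I compute $\H_{\q_\s}\xi_r$ there and check it is proportional to $\mu'(r_{e/c})$ times $\xi_r$ — i.e.\ $r = r_{e/c}$ and the vanishing of $\xi_\psi, \xi_\theta$ are preserved, so the flow within $N^*\{r=r_{e/c}\}$ is radial. The normal source/sink property then comes from linearizing the flow transversally: the quantity $\mu'(r_e) > 0$ (resp.\ $\mu'(r_c) < 0$) together with the rescaling behavior of the flow near fiber infinity gives exponential attraction/repulsion, with rates governed precisely by $\b_e, \b_c$ as defined before Theorem~\ref{thm: Fredholm}. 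The word ``generalized'' in the statement refers to the fact that these are not isolated radial points but whole submanifolds (parametrized by $\psi, \theta$ and the sphere directions of the fiber), on which the flow need not be stationary; this is handled exactly as in \cite{V2013}, where the relevant notion of generalized normal source/sink manifold is set up, so I would verify the defining conditions there (a defining function for the submanifold that is monotone along the flow, plus the appropriate sign of the normal linearization).

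Finally, for the global statement — that every bicharacteristic in $\S_+$ runs from fiber infinity over one horizon out to $r = r_{e}-\de$ or $r = r_c+\de$ — I would combine three ingredients: Lemma~\ref{le: bicharacteristics} (no trapping in the interior $(r_e, r_c)$, via the escape function $\E = e^{C(r-r_0)^2}\H_{\q_\s}r$), the radial point analysis just described at $r = r_{e/c}$, and an explicit check that in the collar regions $(r_e - \de, r_e)$ and $(r_c, r_c + \de)$ the function $r$ is strictly monotone along the flow in $\S_+$ — this last point uses that $\mu < 0$ just outside the horizons and that the spacelike character of the boundary hypersurfaces $\{r = r_e - \de\}$, $\{r = r_c + \de\}$ (Assumption~\ref{ass: main}) forces $\xi_r \neq 0$ there on the characteristic set, with a definite sign on $\S_+$. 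The escape function from Lemma~\ref{le: bicharacteristics} can be modified (e.g.\ by patching with $\pm(r - r_{e/c})$-type weights across the horizons, or simply by noting $\H_{\q_\s} r$ has a fixed sign on $\S_+$ away from the radial sets) to give a single monotone quantity along the whole flow in $\S_+$, forcing each bicharacteristic either into the radial set as $t \to -\infty$ and out to the spacelike boundary as $t \to +\infty$, or vice versa; the sign of $G_*(\md t_*, \cdot)$ being flow-invariant then selects the correct direction. The main obstacle I anticipate is the careful matching of conventions at fiber infinity — keeping track of the rescaling that makes the radial points genuine source/sink manifolds and extracting the sharp rates $\b_{e/c}$, rather than any routine interior estimate; this is where one must be faithful to the precise framework of \cite{V2013} rather than reprove it.
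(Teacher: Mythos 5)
Your overall strategy coincides with the paper's direct argument: characterize $\S_\pm$ over the horizons via the sign of $G_*(\md t_*,\cdot)$, show the conormal bundles are invariant radial sets with rates governed by $\mu'(r_{e/c})$, and combine this with the interior non-trapping of Lemma~\ref{le: bicharacteristics}. However, there are two concrete gaps. First, the formula $\H_{\q_\s}r=2\mu(r)\xi_r$ comes from the Boyer--Lindquist form \eqref{eq: principal symbol classical} of the symbol, which is only valid in the interior $(r_e,r_c)$; at and beyond the horizons one must use the extended form \eqref{eq:principal-symb-classical}, which contains the cross term $-2abf(r)\frac{r_0^2-r^2}{r_0^2+a^2}\xi_{\psi_*}\xi_r$, so that at the horizon $\H_{\q_\s}r|_{r=r_e}=2ab\frac{r_0^2-r_e^2}{r_0^2+a^2}\xi_{\psi_*}$, cf.\ \eqref{eq: Ham r event}. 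This term is precisely what determines the direction in which bicharacteristics cross the horizon (hence that none crosses twice) and what shows that $N^*\{r=r_e\}$ is the \emph{maximal} invariant subset of $\Char(P_\s)\cap\{r=r_e\}$. With your formula, $\H_{\q_\s}r$ would vanish on the entire characteristic set over $\{r=r_e\}$ (since $\mu(r_e)=0$), and both the crossing analysis and the maximality of the radial set would collapse; monotonicity of $r$ in the collars $(r_e-\de,r_e)$ and $(r_c,r_c+\de)$ does not repair this, since the issue sits at $r=r_{e/c}$ itself.

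Second, the \emph{normal} part of the generalized source/sink claim is only gestured at. In the sense of \cite{V2013} this is a quantitative condition: one needs a quadratic defining function $\rho_0$ of $N^*\{r=r_e\}$ within the characteristic set and a $\beta_1>0$ with $\xi_r^{-1}\H_{\q_\s}\rho_0-\beta_1\rho_0\geq 0$ nearby. The paper verifies it by taking $\rho_0$ to be $\xi_r^{-2}$ times the (modified) Carter constant; its Poisson bracket with the remaining, $(r,\xi_r,\xi_{\psi_*})$-dependent part of $\q_\s$ vanishes by separation of variables, which yields $\beta_1=-2\xi_r^{-2}\H_{\q_\s}\xi_r=2\mu'(r_e)>0$ at the radial set. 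Identifying this conserved quantity is the one genuinely non-routine step of the proof, and ``verify the defining conditions'' without it leaves the normality unestablished. (A minor point: the thresholds $\b_{e/c}$ do not enter this lemma; they appear only in the regularity threshold of Theorem~\ref{thm: Fredholm}.)
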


\begin{proof}
This actually follows from the semiclassical considerations in \cite{V2013}, since in a large parameter sense (the relationship between large parameter and semiclassical frameworks being discussed in Section~2.1 there) our new choice of $\psi_*$ amounts to a new choice of the `classical hyperplane' $\sigma=0$ relative to the choice in \cite{V2013} (the classical hyperplane in \cite{V2013} corresponds to the choice $\sigma+\frac{a^2}{r_0^2+a^2}\xi_{\psi_*} = 0$ in the notation of this paper), but at the normal source/sink manifolds, $\xi_{\psi_*}=0$, so these lie at the same location, and the normal source/sink manifold being such even in the large parameter sense for the old choice (which is equivalent to the semiclassical considerations there) implies the same property for our new choice. 
However, for the convenience of the reader we give a direct argument. 

In order to study the characteristic set of $P_\s$ near the horizons, we need to work in the coordinate system $(\tau_*, r, \psi_*, \theta)$, defined in \eqref{eq: star coordinates}.
The dual metric in these coordinates is given by
\begin{align*}
	&(r^2 + a^2 \cos^2(\theta))G_* \\
		&\quad = \mu(r) \d_r^2 - 2 b f(r) \left( (r^2 + a^2) \d_{\tau_*} + a \frac{r_0^2 - r^2}{r_0^2 + a^2} \d_{\psi_*} \right) \d_r \\*
		&\quad \qquad + c(\theta) \d_\theta^2 + \frac{b^2}{c(\theta)\sin^2(\theta)}\left(a \sin^2(\theta)\d_{\tau_*} + \frac{r_0^2 + a^2 \cos^2(\theta)}{r_0^2 + a^2} \d_{\psi_*}\right)^2 \\*
		&\quad \qquad - b^2 \frac{1 - f(r)^2}{\mu(r)} \left( (r^2 + a^2) \d_{\tau_*} + a \frac{r_0^2 - r^2}{r_0^2 + a^2} \d_{\psi_*} \right)^2.
\end{align*}
The principal symbol $\p_\s$ of $P_\s$ is thus given by
\begin{align}\label{eq:principal-symb-classical}
	&(r^2 + a^2\cos^2(\theta))\p_\s(\xi) \nonumber \\
		&\quad = \mu(r) \xi_r^2 - 2 a b f(r) \frac{r_0^2 - r^2}{r_0^2 + a^2} \xi_{\psi_*} \xi_r + c(\theta) \xi_\theta^2 \nonumber \\*
		&\quad \qquad + \Bigg( \frac{b^2}{c(\theta)\sin^2(\theta)}\left(\frac{r_0^2 + a^2 \cos^2(\theta)}{r_0^2 + a^2} \right)^2 - b^2 \frac{1 - f(r)^2}{\mu(r)} \left(a \frac{r_0^2 - r^2}{r_0^2 + a^2} \right)^2 \Bigg) \xi_{\psi_*}^2. 
\end{align}
As before, since the bicharacteristic flow is invariant under conformal rescaling, it suffices to consider
\[
   \q_\s
        := (r^2 + a^2\cos^2(\theta))\p_\s.
\]
By Lemma \ref{le: no bich r 0}, there are no bicharacteristics crossing $r = r_0$.
In this proof, we therefore only study the bicharacteristics where $r < r_0$, as the case $r > r_0$ is similar.
Since
\[
	G_*(\md r, \md r)|_{r = r_e} 
		= 0 
\]
and
\[
	\md t_* = \md \tau_*,
\]
we have
\[
	 \quad G_*(\md t_*, \md r)|_{r = r_e} 
	 	= G_*(\md \tau_*, \md r)|_{r = r_e} 
	 	= b (r_e^2 + a^2) 
	 	> 0.
\]
This means in particular that $- \md r|_{r = r_e}$ is a future pointing lightlike one-form, so it can be used to characterize $\S_\pm \cap \{r = r_e\}$.
For any $\xi \in \Char(P_\s) \cap \{r = r_e\}$, it follows that $\xi \in \S_\pm \cap \{r = r_e\}$ if and only if
\[
	\pm G_*(\md t_*, \xi)|_{r = r_e} > 0,
\]
which is equivalent to
\[
	\pm G_*(-\md r, \xi)|_{r = r_e} > 0
\]
or 
\[
	\xi|_{r = r_e} = \xi_r \md r|_{r = r_e},
\]
with $ \pm \xi_r|_{r = r_e} > 0$.
We now compute the Hamiltonian vector field applied to the function $r$ at the event horizon:
\begin{equation} \label{eq: Ham r event}
\begin{split}
	\H_{\q_\s}  r|_{r = r_e}
		&= \sum_{j = 1}^3 \left( (\d_{\xi_j}\q_\s)\d_j - (\d_j \q_\s)\d_{\xi_j} \right) r |_{r = r_e} \\
		&= 2 ab \frac{r_0^2 - r_e^2}{r_0^2 + a^2} \xi_{\psi_*}|_{r = r_e} \\
		&= - G(-\md r, \xi)|_{r = r_e}.
\end{split}
\end{equation}
We conclude that all bicharacteristics in $\S_+$ crossing the event horizon enter the black hole region, where $r < r_e$, and all bicharacteristics in $\S_-$ crossing the event horizon enter the domain of outer communication, where $r \in (r_e, r_c)$.
In particular, no bicharacteristic can pass the event horizon twice.
On the other hand, Lemma \ref{le: bicharacteristics} implies that each bicharacteristic in $\S_{\pm}$ in the domain of outer communication  turns in the past/future and approaches the event horizon $r = r_e$. \\
It remains to show that the fiber infinity of 
\[
	N^*\{r = r_e\} \cap \{ \pm \xi_r > 0\}
\]
is a generalized normal source/sink of the bicharacteristic flow, in the sense of \cite{V2013}.
Let us first check that $N^*\{r = r_e\}$ is the largest subset of
\[
	\Char(P_\s) \cap \{r =r_e\},
\]
which is invariant under the bicharacteristic flow.
Indeed, equation \eqref{eq: Ham r event} shows that the Hamiltonian vector field is transversal unless $\xi_{\psi_*} = 0$.
But if $\xi_{\psi_*} = 0$, then equation \eqref{eq:principal-symb-classical} implies that $\xi_\theta = 0$ and hence
\[
	\xi \in N^*\{r = r_e\}.
\]
Restricting the Hamiltonian vector field to the conormal bundle, we get
\[
	\xi_r^{-1} \H_{\q_\s}|_{N^*\{r = r_e\}} 
		= 2 a b \frac{r_0^2 - r_e^2}{r_0^2 + a^2} \d_{\psi_*} - \mu'(r_e) \xi_r \d_{\xi_r}.
\]
Since $\mu'(r_e)>0$, this shows that the fiber infinity of 
\[
	N^*\{r = r_e\} \cap \{ \pm \xi_r > 0\}
\]
is a generalized source/sink of the bicharacteristic flow, in the sense of \cite{V2013}.
We finally need to check that it indeed is a \emph{normal} source/sink in the generalized sense of \cite{V2013}. 
This is to say that for a suitable (local) quadratic defining function $\rho_0$ for $N^*\{r = r_e\}$ in the characteristic set of $P_\s$, modulo cubicly vanishing terms at $N^*\{r = r_e\}$, there is a function $\beta_1 > 0$ such that
\[
	\xi_r^{-1}\H_{\q_\s}\rho_0 - \beta_1 \rho_0
		\geq 0
\]
near $N^*\{r = r_e\}$.
We can for example take $\rho_0$ to be a multiple of the (modified) Carter constant, i.e.
\[
	\rho_0
		:= \xi_r^{-2} \left( c(\theta) \xi_\theta^2 + \frac{b^2}{c(\theta)\sin^2(\theta)}\left(\frac{r_0^2 + a^2 \cos^2(\theta)}{r_0^2 + a^2} \right)^2 \xi_{\psi_*}^2 \right).
\]
Note that
\begin{align*}
	\H_{\q_\s} \xi_r^2 \rho_0
		&= \{\q_\s, \xi_r^2 \rho_0 \} \\
		&= \Bigg\{ \mu(r) \xi_r^2 - 2 a b f(r) \frac{r_0^2 - r^2}{r_0^2 + a^2} \xi_{\psi_*} \xi_r - b^2 \frac{1 - f(r)^2}{\mu(r)} \left(a \frac{r_0^2 - r^2}{r_0^2 + a^2} \right)^2 \xi_{\psi_*}^2, \\
		&\quad \qquad c(\theta) \xi_\theta^2 + \frac{b^2}{c(\theta)\sin^2(\theta)}\left(\frac{r_0^2 + a^2 \cos^2(\theta)}{r_0^2 + a^2} \right)^2 \xi_{\psi_*}^2 \Bigg\} + \{\xi_r^2 \rho_0, \xi_r^2 \rho_0\} \\
		&= 0,
\end{align*}
since the second Poisson bracket vanishes trivially and the first Poisson bracket vanishes because the first factor only depends on $(r, \xi_r, \xi_{\psi_*})$ and the second factor only depends on $(\theta, \xi_\theta, \xi_{\psi*})$.
We deduce that
\begin{align*}
  \xi_r^{-1}\H_{\q_\s}\rho_0
  	= \xi_r\rho_0 \H_{\q_\s}\xi_r^{-2}
  	= - 2 \xi_r^{-2} (\H_{\q_\s}\xi_r)
  \rho_0,
\end{align*}
so $\beta_1 = - 2 \xi_r^{-2} (\H_{\q_\s}\xi_r)$.
At $N^*\{r = r_e\}$, we have
\[
	\beta_1
		= 2 \mu'(r_e),
\]
which is positive as desired. 
We have thus shown that the fiber infinity in 
\[
	N^*\{r = r_e\} \cap \{\pm \xi_r > 0\}
\]
is a normal source/sink in the generalized sense of \cite{V2013}.
This concludes the proof, since the behavior near $r = r_c$ is studied analogously.
\end{proof}

\begin{proof}[Proof of Theorem \ref{thm: Fredholm}]
By Lemma \ref{le: classical radial points}, the dynamics of the bicharacteristics at and beyond the horizons $\Ho_e^+ \cap L_*$ and $\Ho_c^+ \cap L_*$ is precisely the same as in \cite{V2013}*{Section~6.1}, i.e.\ the analysis for $P_\s$ in our setup is similar to the analysis for $\hat P_\s$ there.
The proof of Theorem \ref{thm: Fredholm} therefore follows the same lines as the proof of \cite{V2013}*{Thm.\ 1.4}.
\end{proof}

\begin{proof}[Proof of Theorem \ref{thm: QNMs}]
Since $P_\s$ is invertible for $\Im \s \gg 1$, analytic Fredholm theory implies that $P_\s$ has a meromorphic extension to the open set
\[
	\Omega_s := \left\{ \Im \s > \frac{1 - 2s}{2\b} \right\}.
\]
In particular, $P_\s$ is invertible everywhere in $\Omega_s$ apart form a discrete set.
Moreover, since $P_\s$ has index zero, $P_\s$ is invertible if and only if the kernel of $P_\s$ is trivial.
Since 
\[
	\C = \bigcup_{s \in \R} \Omega_s,
\]
we conclude that $\ker(P_\s)$ is non-trivial precisely on a discrete set $\A \subset \C$.
Following the arguments in the proof of \cite{PV2021}*{Theorem~1.2} line by line, using Theorem \ref{thm: Fredholm} in place of the \cite{V2013}*{Theorem~1.1}, it follows that smooth elements in $\ker(P_\s)$ are real analytic if the coefficients of $P$ are real analytic.
\end{proof}

\begin{remark}
We recall that \cite{PV2021} proves the real analyticity of the
quasinormal modes by using yet
another Killing vector field, or rather two, one each for the two horizons,
with respect to which the Fourier transformed wave operator is of
Keldysh type, so has a similar structure to that of the
Schwarzschild-de Sitter spacetime relative to the standard
$\partial_{t_*}$. (The Killing vector fields we use are lightlike at the
horizon under study.) Hence, after this reduction, the real analyticity
result of Galkowski and Zworski \cite{GZ2020} can be used in a local
manner at each horizon. Then the
mode analyticity with respect to any other Killing vector field which
gives rise to a global Fredholm theory, such as ours presently, is
deduced by decomposing the quasinormal modes into eigenmodes relative
to the vector field $\partial_{\psi_*}$, each of which is a quasinormal
mode relative to the horizon Killing vector fields as well.
  \end{remark}

\section{Normally hyperbolic trapping}

The goal of this section is to prove Theorem \ref{thm: asymptotic expansion}.
This generalizes \cite{V2013} by \emph{removing} the assumption
\begin{equation} \label{eq: old condition 2}
    \abs{a} \leq \frac{\sqrt 3}{2}m,
\end{equation}
which was required in \cite{V2013}*{(6.27)}.
In the previous section, we studied the mode operator and showed in particular that the bicharacteristics of that operator are non-trapped.
As is well-known, there are trapped bicharacteristics for the full wave operator, i.e.~trapped lightlike geodesics.

In order to apply the \emph{semi-classical} or \emph{high energy} estimates from \cite{V2013}, we need to prove certain properties of the trapping in the domain of outer communication.
More precisely, we need to show that the trapping is \emph{normally hyperbolic}.
This was done in \cite{V2013} assuming \eqref{eq: old condition 2}.
In this section, we prove that the analogous results hold for the full subextremal range.
Since we only work in the domain of outer communication, it is convenient to work in Boyer-Lindquist coordinates $(t, r, \phi, \theta)$ with dual variables $(\xi_t, \xi_r, \xi_\phi, \xi_\theta)$. 
We let $\p$ denote the principal symbol of the wave operator $P$.

\begin{remark}
Since $\d_t$ and $\d_\phi$ are Killing vector fields, it follows that $\xi_t$ and $\xi_\phi$ are constant along the Hamiltonian flow with respect to $\p$.
\end{remark}

\begin{thm}[Trapping in the subextremal Kerr-de Sitter spacetimes] \label{thm: trapping in Kerr-de Sitter} 
For any $(\xi_t, \xi_\phi) \in \R^2 \backslash \{(0, 0)\}$, define the function
\[
    F(r) := \frac1\mu \left((r^2 + a^2)\xi_t + a \xi_\phi \right)^2.
\]
\begin{enumerate}[label=(\alph*)]
\item \label{claim: uniqueness} Either 
\begin{itemize}
	\item $F$ vanishes at $r = r_e$ or $r_c$ and $F$ and has no critical point in $(r_e, r_c)$,
\end{itemize}
or
\begin{itemize} 
	\item $F$ has precisely one critical point $r_{\xi_t, \xi_\phi} \in (r_e, r_c)$ and $F''\left(r_{\xi_t, \xi_\phi}\right) > 0$.
\end{itemize}
\item \label{claim: positivity} $F$ is positive on the characteristic set in $M$.
\item \label{claim: trapped set} The trapped set in $M$ is 
\[
    \Gamma = \bigcup_{(\xi_t, \xi_\phi) \in \R^2\backslash \{(0, 0)\}} \Gamma_{\xi_t, \xi_\phi},
\]
where
\[  
    \Gamma_{\xi_t, \xi_\phi} := \{\xi_r = 0, r = r_{\xi_t, \xi_\phi} \} \cap \Char(P).
\]
\item \label{claim: smooth trapping} $\Gamma$ is a smooth connected $5$-dimensional submanifold of $T^*M$, with defining functions $\xi_r, r - r_{\xi_t, \xi_\phi}$ and $\p$.
\item \label{claim: linearization} The linearization of the bicharacteristic flow at $\Gamma$ is given by
\begin{align*}
    \H_\p \begin{pmatrix} r - r_{\xi_t, \xi_\phi} \\ \xi_r \end{pmatrix}
        &= \frac1{(r_{\xi_t, \xi_\phi}^2 + a^2\cos^2(\theta))} \begin{pmatrix} 0 & 2 \mu(r_{\xi_t, \xi_\phi}) \\
        b^2F''(r_{\xi_t, \xi_\phi}) & 0 \end{pmatrix} \begin{pmatrix} r - r_{\xi_t, \xi_\phi} \\ \xi_r \end{pmatrix} \\
        &\qquad + \O\left((r - r_{\xi_t, \xi_\phi})^2 + \xi_r^2 \right)
\end{align*}
\end{enumerate}
In particular, the trapping in the domain of outer communication in
any subextremal Kerr-de Sitter spacetime is normally hyperbolic
trapping in the sense of \cite{WZ2011}. The stable ($s,-$) and unstable ($u,+$)
manifolds are the smooth manifolds given by
\[
	\Gamma^{u/s}=
		\left \{ 
			\xi_r=\pm \sgn(r-r_{\xi_t,\xi_\phi})b\sqrt{\frac{F(r)-F(r_{\xi_t,\xi_\phi})}{\mu}}
		\right\} \cap \Char(P).
\]
\end{thm}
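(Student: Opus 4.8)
The plan is to work throughout in Boyer--Lindquist coordinates, starting from the explicit dual metric. Writing $N:=(r^2+a^2)\xi_t+a\xi_\phi$, so that $F=N^2/\mu$, the rescaled principal symbol $\q:=(r^2+a^2\cos^2(\theta))\,\p$ takes the form $\q=\mu(r)\xi_r^2+Q-b^2N^2/\mu(r)$ with
\[
	Q:=c(\theta)\xi_\theta^2+\frac{b^2}{c(\theta)\sin^2(\theta)}\bigl(a\sin^2(\theta)\xi_t+\xi_\phi\bigr)^2\ge 0 .
\]
Along the $\p$--flow $\xi_t$ and $\xi_\phi$ are conserved, and so is $Q$, since the $(r,\xi_r)$--dependent part $\mu\xi_r^2-b^2N^2/\mu$ and the $(\theta,\xi_\theta)$--dependent part $Q$ Poisson commute (they involve disjoint pairs of conjugate variables); this Carter constant will drive the description of the stable and unstable manifolds. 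Part \ref{claim: positivity} is then immediate: on $\Char(P)$ one has $b^2N^2/\mu=\mu\xi_r^2+Q\ge 0$, and equality would force $\xi_r=0$, $\xi_\theta=0$ and $a\sin^2(\theta)\xi_t+\xi_\phi=0$ together with $N=0$; the determinant of this linear system in $(\xi_t,\xi_\phi)$ equals $-(r^2+a^2\cos^2(\theta))\ne 0$, so $(\xi_t,\xi_\phi)=(0,0)$, which is excluded (any nonzero covector in $\Char(P)$ has $(\xi_t,\xi_\phi)\ne(0,0)$).

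For \ref{claim: uniqueness}, differentiating gives $F'=N(2N'\mu-N\mu')/\mu^2$, so an interior critical point $r_*$ is a zero of $N$ or of $H:=2N'\mu-N\mu'$. If $N(r_*)=0$ then $F=(r-r_*)^2h$ near $r_*$ with $h$ smooth and $h(r_*)>0$, so $F''(r_*)>0$; if $N(r_*)\ne 0$, feeding the critical relation $\mu'(r_*)N(r_*)=2N'(r_*)\mu(r_*)$ into $F''$ gives, after simplification,
\[
	F''(r_*)=\frac{N(r_*)^2}{2\,r_*\,\mu(r_*)^3}\,P(r_*),\qquad P(r):=r\mu'(r)^2+2\mu(r)\mu'(r)-2r\mu(r)\mu''(r)=r\mu'(r)^2+4\mu(r)\Bigl(\tfrac{4\Lambda}{3}r^3-m\Bigr).
\]
The crux --- and the place where the full subextremal range genuinely enters, hence the main obstacle --- is the strict positivity $P(r)>0$ for $r\in[r_e,r_c]$. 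Here $P$ is an explicit quartic with $P(r_e)=r_e\mu'(r_e)^2>0$, $P(r_c)=r_c\mu'(r_c)^2>0$ and $P(r_0)=-2r_0\mu(r_0)\mu''(r_0)>0$ (here $\mu''(r_0)<0$: by Rolle $\mu'$ has a simple root in each of $(r_-,r_C)$, $(r_C,r_e)$, $(r_e,r_c)$, so $\mu''$ cannot vanish at $r_0$, which is an interior maximum of $\mu$), so what remains is to exclude zeros of the quartic $P$ in the open interval, which I would do by a direct analysis of $P$ using the discriminant inequality \eqref{eq: discriminant}; this is the step at which the old restriction $|a|\le\tfrac{\sqrt3}{2}m$ was not sharp. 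Granting $P>0$, every interior critical point of $F$ is a strict local minimum, hence there is at most one; combining this with $F\ge 0$ and with the fact that $F(r)\to+\infty$ as $r\to r_e^+$ (resp.\ $r\to r_c^-$) precisely when $N$ does not vanish there, a short case distinction yields the dichotomy, and $F''(r_{\xi_t,\xi_\phi})>0$ in the second alternative.

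For \ref{claim: trapped set}--\ref{claim: linearization}, I would use $\H_\p r=2\mu(r)\xi_r/(r^2+a^2\cos^2(\theta))$ and, at $\xi_r=0$, $\H_\q^2 r=2\mu(r)b^2F'(r)$; thus along any bicharacteristic $r$ turns only where $\xi_r=0$, at a strict local minimum if $F'>0$ and a strict local maximum if $F'<0$. By \ref{claim: uniqueness}, $F'<0$ on $(r_e,r_{\xi_t,\xi_\phi})$ and $F'>0$ on $(r_{\xi_t,\xi_\phi},r_c)$ (and in the first alternative $F$ is monotone, forcing no trapping), so an escape function of the form $e^{C(r-r_{\xi_t,\xi_\phi})^2}\H_\q r$ shows a bicharacteristic in $M$ is trapped if and only if $r\equiv r_{\xi_t,\xi_\phi}$ and $\xi_r\equiv 0$, while conversely $\Gamma_{\xi_t,\xi_\phi}$ is invariant because there $\H_\q r=\H_\q\xi_r=0$. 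Smoothness of $r_{\xi_t,\xi_\phi}$ in $(\xi_t,\xi_\phi)$ is the implicit function theorem applied to $\partial_r F=0$ with $F''\ne 0$, and $\Gamma$ is cut out in $T^*M$ by the three functions $\xi_r$, $r-r_{\xi_t,\xi_\phi}$ and $\p$, whose differentials are independent on $\Gamma$ (there $\partial_{\xi_r}\p=0$ and $\partial_r\p=0$, since $\xi_r=0$ and $F'(r_{\xi_t,\xi_\phi})=0$, whereas $d\p\ne 0$ on $\Char(P)\setminus 0$); this gives a $5$--dimensional submanifold, with connectedness checked directly from the parametrization of $\Gamma$ by $(t,\phi,r,\theta,\xi_t,\xi_\theta)$ subject to $\p=0$ together with conic invariance. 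Part \ref{claim: linearization} then follows by inserting $\H_\p r_{\xi_t,\xi_\phi}=0$, the identity $\H_\p\xi_r=-\partial_r\p=(b^2F'(r)-\mu'(r)\xi_r^2)/(r^2+a^2\cos^2(\theta))$ valid on $\Char(P)$, and the Taylor expansion of $F'$ about $r_{\xi_t,\xi_\phi}$ (where $F'$ vanishes).

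Finally, the $2\times 2$ matrix in \ref{claim: linearization} has eigenvalues $\pm\sqrt{2\mu(r_{\xi_t,\xi_\phi})\,b^2 F''(r_{\xi_t,\xi_\phi})}$, which are real and nonzero precisely because $\mu(r_{\xi_t,\xi_\phi})>0$ and $F''(r_{\xi_t,\xi_\phi})>0$; since the flow tangent to $\Gamma$ (linear growth in $t,\phi$ and a bounded oscillation in $(\theta,\xi_\theta)$ on a level set of $Q$) carries no exponential rates, this exhibits $\Gamma$ as normally hyperbolic trapping in the sense of \cite{WZ2011}. For the stable and unstable manifolds, the invariant set $\{Q=b^2F(r_{\xi_t,\xi_\phi})\}\cap\Char(P)$ (both $Q$ and $F(r_{\xi_t,\xi_\phi})$ being conserved) equals $\{\mu\xi_r^2=b^2(F(r)-F(r_{\xi_t,\xi_\phi}))\}\cap\Char(P)$; writing $F(r)-F(r_{\xi_t,\xi_\phi})=(r-r_{\xi_t,\xi_\phi})^2h(r)$ with $h$ smooth and positive (using $F'(r_{\xi_t,\xi_\phi})=0$ and, by \ref{claim: uniqueness}, $F>F(r_{\xi_t,\xi_\phi})$ elsewhere), the two branches $\xi_r=\pm\sgn(r-r_{\xi_t,\xi_\phi})b\sqrt{(F(r)-F(r_{\xi_t,\xi_\phi}))/\mu}$ are smooth, meet along $\Gamma$, and --- by the sign of $\dot r\propto\xi_r$ --- are respectively the unstable and the stable manifold.
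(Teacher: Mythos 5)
Your reduction of the theorem is exactly the paper's: the quartic you call $P(r)=r\mu'(r)^2+2\mu(r)\mu'(r)-2r\mu(r)\mu''(r)$ is precisely the negative of the quantity $h(r)=2\mu\,\partial_r(r\mu'-4\mu)-\mu'(r\mu'-4\mu)$ of Proposition~\ref{prop: main}, and your formula $F''(r_*)=\frac{N(r_*)^2}{2r_*\mu(r_*)^3}P(r_*)$ agrees with the paper's $F''=-8r\xi_t^2(\mu\mu'^2)^{-1}h$ after substituting the critical relation. Parts \ref{claim: positivity}--\ref{claim: linearization}, the escape function $e^{C(r-r_{\xi_t,\xi_\phi})^2}\H_{\q}r$, the implicit-function-theorem smoothness argument, and the identification of $\Gamma^{u/s}$ as level sets of the conserved Carter quantity all track the paper's proof essentially verbatim and are fine.

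The genuine gap is that you do not prove $P>0$ on $(r_e,r_c)$: you correctly flag it as ``the crux'' and then proceed by ``granting $P>0$''. The evaluations $P(r_e)=r_e\mu'(r_e)^2>0$, $P(r_c)=r_c\mu'(r_c)^2>0$ and $P(r_0)=-2r_0\mu(r_0)\mu''(r_0)>0$ do not exclude a sign change of a quartic elsewhere in $(r_e,r_c)$, and ``a direct analysis of $P$ using the discriminant inequality'' is exactly the step that the old restriction $|a|\le\frac{\sqrt3}{2}m$ of \cite{V2013} was imposed to avoid; it is the technical heart of the extension to the full subextremal range. In the paper this occupies Lemma~\ref{le: the polynomial} (three algebraic rewritings of $h$, one of which disposes of $r>\frac{4a^2}{3m}$ and of the regime $\frac{a^2}{m^2}\le(1-\frac{\Lambda a^2}{3})^{-1}$ by sign inspection), Lemma~\ref{le: discriminant} (reparametrizing the discriminant condition \eqref{eq: discriminant} to show it holds for $\Lambda$ in an interval $(\Lambda_0,\Lambda_1)$ with $\Lambda_0>0$ when $|a|>m$, and that it forces $\frac{a^2}{m^2}<\frac98(1-\frac{\Lambda a^2}{3})^{-1}$), and then a monotonicity-in-$\Lambda$ argument combined with a third-order Taylor expansion of $h$ at $r_e$ in the degenerate limit $\Lambda=\Lambda_0$ where $r_C=r_e$. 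Without this inequality, claim \ref{claim: uniqueness} -- and with it the uniqueness of $r_{\xi_t,\xi_\phi}$, the identification of the trapped set, and the normal hyperbolicity -- is not established.
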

The main computation in the proof is the following:
\begin{prop}\label{prop: main}
We have
\[
	h(r) := 2 \mu \d_r (r\mu' - 4 \mu) - \mu'(r\mu' - 4 \mu) < 0,
\]
for all $r \in (r_e, r_c)$.
\end{prop}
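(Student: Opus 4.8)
The plan is to reduce the inequality to a single structured identity,
\[
  -\,r\,h(r)\;=\;4\bigl(3mr-4a^2\bigr)\,\mu(r)\;+\;\bigl(r\mu'(r)-4\mu(r)\bigr)^2 ,
\]
from which the sign of $h$ on $(r_e,r_c)$ can be read off. First I would compute $r\mu'-4\mu$: writing $\mu=-\tfrac{\Lambda}{3}r^4+\bigl(1-\tfrac{\Lambda a^2}{3}\bigr)r^2-2mr+a^2$, the quartic terms in $r\mu'$ and $4\mu$ cancel, leaving the \emph{quadratic} $r\mu'-4\mu=-2\bigl(1-\tfrac{\Lambda a^2}{3}\bigr)r^2+6mr-4a^2$. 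Substituting this together with $\mu$ and $\mu'$ into $h=2\mu\,\partial_r(r\mu'-4\mu)-\mu'(r\mu'-4\mu)$ and expanding (the $r^5$ terms cancel, and one uses the elementary identity $4\bigl(1-\tfrac{\Lambda a^2}{3}\bigr)^2+\tfrac{16\Lambda a^2}{3}=4\bigl(1+\tfrac{\Lambda a^2}{3}\bigr)^2$) gives
\[
  h(r)=4m\Bigl(\Lambda r^4+3\bigl(1-\tfrac{\Lambda a^2}{3}\bigr)r^2+a^2\Bigr)-4r\Bigl(\bigl(1+\tfrac{\Lambda a^2}{3}\bigr)^2 r^2+3m^2\Bigr).
\]
The displayed identity then follows from a second, purely algebraic, computation — most transparently by eliminating one power of $m$ via $2mr=(r^2+a^2)\bigl(1-\tfrac{\Lambda r^2}{3}\bigr)-\mu(r)$, or simply by expanding the right-hand side of the identity and matching coefficients with $-rh$.

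Granting the identity, the conclusion is immediate on the part of $(r_e,r_c)$ where $3mr\ge 4a^2$: there $\mu>0$ forces $-rh>0$, the two summands being nonnegative and not simultaneously zero (the only way both vanish is on the parameter locus $9m^2=8a^2\bigl(1-\tfrac{\Lambda a^2}{3}\bigr)$ with $r$ at a specific value, a degenerate situation to be excluded using subextremality). The substantive case is $3mr<4a^2$ — which genuinely occurs, for instance near $r_e$ when $|a|$ is close to the extremal value and $\Lambda$ is small — where one must show $\bigl(r\mu'-4\mu\bigr)^2>4\bigl(4a^2-3mr\bigr)\mu$ on $(r_e,r_c)$; this is where the discriminant condition \eqref{eq: discriminant} must be invoked. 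As a guide to the target, in the Kerr limit $\Lambda=0$ the identity specializes to $-rh=4\bigl[\mu^2+(mr-a^2)\mu+r^2(m^2-a^2)\bigr]$, and then $|a|<m$ together with $r>r_e>m$ makes all three terms strictly positive; for $\Lambda>0$ the analogous positivity is the main obstacle, and I expect it to come from rewriting $4(3mr-4a^2)\mu+(r\mu'-4\mu)^2$ as a quadratic form in $\mu$ with $r$-dependent polynomial coefficients whose positivity on the range of $\mu\big|_{(r_e,r_c)}$ is forced by \eqref{eq: discriminant}.

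Finally, as consistency checks the identity reproduces the expected values at the three distinguished radii: $h(r_e)=-r_e\mu'(r_e)^2<0$ and $h(r_c)=-r_c\mu'(r_c)^2<0$ (using $r_e,r_c>0$, which holds throughout the subextremal range), while $h(r_0)=2r_0\,\mu(r_0)\,\mu''(r_0)<0$ since $r_0>0$, $\mu(r_0)>0$, and $\mu''(r_0)<0$ — the last because $r_0$ is a nondegenerate interior maximum of $\mu$ on $(r_e,r_c)$ (were $\mu''(r_0)=0$, then since $\mu'''(r_0)=-8\Lambda r_0\ne 0$ the point $r_0$ would be an inflection point, not a maximum). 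Thus everything is reduced to controlling the sign of $4(3mr-4a^2)\mu+(r\mu'-4\mu)^2$, the one genuinely new input being the regime $3mr<4a^2$.
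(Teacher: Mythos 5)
Your opening identity is exactly the paper's starting point: multiplying the paper's identity \eqref{eq: Andras identity}, $h=-\tfrac1r(r\mu'-4\mu)^2-4\mu\bigl(3m-\tfrac{4a^2}{r}\bigr)$, by $-r$ gives your display, and your expanded form of $h$ agrees with \eqref{eq: full expression}. Your disposal of the region $3mr>4a^2$ is the paper's Step 1, and your Kerr-limit computation is correct. The problem is that everything after that is a plan, not a proof. The case $r\in\bigl(r_e,\tfrac{4a^2}{3m}\bigr]$ is not a degenerate afterthought but the entire content of the proposition, and your proposed mechanism for it --- ``rewriting $4(3mr-4a^2)\mu+(r\mu'-4\mu)^2$ as a quadratic form in $\mu$ with $r$-dependent coefficients whose positivity on the range of $\mu$ is forced by \eqref{eq: discriminant}'' --- is not carried out and is not obviously viable: $r\mu'-4\mu$ is itself an affine function of $\mu$ (for fixed $r$ the two are not independent), and the positivity you need is a statement about a specific quartic in $r$ whose sign on $(r_e,\tfrac{4a^2}{3m}]$ depends quantitatively, not just qualitatively, on the subextremality condition. ``I expect it to come from\dots'' is where the proof has to begin, not end.

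For comparison, the paper's route through this region requires three nontrivial inputs that your proposal does not supply: (i) Lemma~\ref{le: discriminant}, which converts \eqref{eq: discriminant} into the interval structure $\Lambda\in(\Lambda_0,\Lambda_1)$ with $\Lambda_0>0$ when $|a|>m$, together with the quantitative bound $\tfrac{a^2}{m^2}<\tfrac98\bigl(1-\tfrac{\Lambda a^2}{3}\bigr)^{-1}$; (ii) the observation, via \eqref{eq: full expression}, that $\partial_\Lambda h<0$ on the relevant $r$-interval (which itself uses the bound from (i) to verify $r_2\ge\tfrac{4a^2}{3m}$), reducing the problem to the extremal limit $\Lambda=\Lambda_0$ where $r_C=r_e$ is a double root of $\mu$; and (iii) a third-order Taylor estimate of $h$ at that double root, using $\mu(r_e)=\mu'(r_e)=f(r_e)=0$, to get $h<0$ on $(r_e,\tfrac{4a^2}{3m}]$ there. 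There is also an intermediate regime, $\tfrac{a^2}{m^2}\le\bigl(1-\tfrac{\Lambda a^2}{3}\bigr)^{-1}$, which the paper handles cleanly via the cubic decomposition \eqref{eq: new identity} together with $r_e>m\bigl(1-\tfrac{\Lambda a^2}{3}\bigr)^{-1}$; your Kerr-limit factorization is the $\Lambda=0$ shadow of this, but you have no analogue of \eqref{eq: new identity} for $\Lambda>0$, and no argument at all once $\tfrac{a^2}{m^2}$ exceeds $\bigl(1-\tfrac{\Lambda a^2}{3}\bigr)^{-1}$. As written, the proposal establishes the proposition only on $\bigl(\tfrac{4a^2}{3m},r_c\bigr)\cap(r_e,r_c)$.
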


We postpone the proof of the Proposition \ref{prop: main}, as it will take up a significant amount of this section.

\begin{proof}[Proof of Theorem \ref{thm: trapping in Kerr-de Sitter}, assuming Proposition \ref{prop: main}]
We begin by proving claim \ref{claim: uniqueness}.
If $\xi_t = 0$, then the claim is clearly true with $r_{\xi_t, \xi_\phi} = r_0$, we may therefore assume that $\xi_t \neq 0$.
We now consider critical points of $F$ in $(r_e, r_c)$.
Defining
\[
    f(r) := \left((r^2 + a^2)\xi_t + a \xi_\phi\right)\mu' - 4 r \xi_t \mu
\]
we note that
\[
    F'(r) = - \frac{(r^2 + a^2)\xi_t + a \xi_\phi}{\mu^2} f(r),
\]
which vanishes if either 
\begin{equation} \label{eq: first condition}
	(r^2 + a^2)\xi_t + a \xi_\phi = 0
\end{equation}
or
\begin{equation} \label{eq: second condition}
	f(r) = 0.
\end{equation}
for some $r \in (r_e, r_c)$.
We claim that all critical points of $F$ in $(r_e, r_c)$ are local strict minima.
Note first that since $-4 r \xi_t \mu(r) \neq 0$, for all $r \in (r_e, r_c)$, there is no critical point of $F$ satisfying both \eqref{eq: first condition} and \eqref{eq: second condition} simultaneously.
Any critical point $r \in (r_e, r_c)$  of $F$, satisfying \eqref{eq: first condition}, satisfies
\[
	F''(r) 
		= - \frac{2r\xi_t}{\mu^2}\left( - 4 r \xi_t \mu \right)
		= 8 \frac{r^2\xi_t^2}{\mu} > 0,
\]
and is therefore a local strict minima.
It thus remains to study the case \eqref{eq: second condition}, which is equivalent to
\[
	(r^2 + a^2)\xi_t + a \xi_\phi = \frac{4r \mu \xi_t}{\mu'},
\]
since $\mu' \neq 0$ at the roots of $f$, since $- 4 \xi_t \mu \neq 0$.
At such points, we compute that
\begin{align*}
	f'(r) 
		&= \mu''(r)\left((r^2 + a^2)\xi_t + a \xi_\phi \right) + 2 r \mu'(r)\xi_t - 4 \mu(r)\xi_t - 4 r \mu'(r) \xi_t \\
		&= \xi_t\left( \mu''(r) \frac{4r \mu(r)}{\mu'(r)} - 4 \mu(r) - 2 r \mu'(r) \right) \\
		&= \frac {2\xi_t}{\mu'(r)} \left( 2 r \mu(r) \mu''(r) - 2 \mu(r) \mu'(r) - r \mu'(r)^2 \right) \\
		&= \frac {2\xi_t} {\mu'(r)} \left( 2 \mu(r) \d_r (r \mu'(r) - 4 \mu(r)) - \mu'(r)(r \mu'(r) - 4 \mu(r)) \right) \\
		&= \frac {2\xi_t} {\mu'(r)} h(r),
\end{align*}
where $h$ was defined in Proposition \ref{prop: main}.
It follows that all critical points of $F$, satisfying \eqref{eq: second condition}, satisfy
\begin{align*}
    F''(r)
        &= - \frac{(r^2 + a^2)\xi_t + a \xi_\phi}{\mu^2} f'(r)
        = - 8 r \frac{\xi_t^2}{\mu \mu'^2} h(r)
        > 0,
\end{align*}
by Proposition \ref{prop: main}.
We conclude that all critical points $r \in (r_e, r_c)$ of $F$ are local strict minima.
Now, if $(r^2 + a^2)\xi_t + a \xi_\phi$ vanishes precisely at $r = r_e$ or $r_c$, then $F \to \infty$ at the other end point of $[r_e, r_c]$ and it follows that $F$ cannot have critical points for they would all be local strict minima.
In the remaining case, $F \to \infty$ as $r \to r_e$ and $r_c$ and we conclude that there is a unique strict minimum $r_{\xi_t, \xi_\phi} \in (r_e, r_c)$.
This proves claim \ref{claim: uniqueness}.

We continue by proving claim \ref{claim: positivity}, i.e.\ that 
\[
	(r^2 + a^2)\xi_t + a \xi_\phi \neq 0
\]
in the characteristic set.
The principal symbol $\p$ of $P$ is given by
\begin{align*}
	(r^2 + a^2\cos^2(\theta)) \p(\xi)
		&= \mu(r) \xi_r^2 + c(\theta) \xi_\theta^2 + \frac{b^2}{c(\theta)\sin^2(\theta)} \left(a\sin^2(\theta)\xi_t + \xi_\phi \right)^2 \\*
		&\qquad - \frac{b^2}{\mu(r)} \left( (r^2 + a^2) \xi_t + a \xi_\phi \right)^2.
\end{align*}
Assume there is a point in the characteristic set satisfying $(r^2 + a^2)\xi_t + a \xi_\phi = 0$, then
\[
	a \sin^2(\theta) \xi_t + \xi_\phi = 0
\]
and we get a solution to the linear equation
\[
	\begin{pmatrix}
	r^2 + a^2 & a \\
	a \sin^2(\theta) & 1
	\end{pmatrix}
	\begin{pmatrix}
	\xi_t \\
	\xi_\phi
	\end{pmatrix}
	=
	0,
\]
and the determinant of the matrix is
\[
	r^2 + a^2 \cos^2(\theta) > 0.
\]
This implies that $\xi_t = \xi_\phi = 0$, which in turn implies that $\xi_r = \xi_\theta = 0$.

Let us now show claim \ref{claim: trapped set}.
For this, recall first that $\Char(P)$ is invariant under $\H_\p$.
Since the bicharacteristic flow is invariant under conformal changes, let us for simplicity study
\begin{align*}
	\q(\xi)
	    &=(r^2 + a^2\cos^2(\theta)) \p(\xi) \\
		&= \mu(r) \xi_r^2 + c(\theta) \xi_\theta^2 + \frac{b^2}{c(\theta)\sin^2(\theta)} \left(a\sin^2(\theta)\xi_t + \xi_\phi \right)^2 \\*
		&\qquad - \frac{b^2}{\mu(r)} \left( (r^2 + a^2) \xi_t + a \xi_\phi \right)^2.
\end{align*}
Since $\H_\q \xi_t = \H_\q \xi_\phi = 0$, it follows that $\H_\q r_{\xi_t, \xi_\phi} = 0$ and we use this to compute
\begin{align}
	\H_\q (r - r_{\xi_t, \xi_\phi})
	    &= 2 \mu(r) \xi_r, \label{eq: Hamiltonian r} \\
	\H_\q \xi_r
		&= - \d_r \left( \mu \xi_r^2 - \frac{b^2}{\mu(r)} \left( (r^2 + a^2) \xi_t + a \xi_\phi \right)^2 \right) \nonumber \\
		&= - \mu' \xi_r^2 + b^2 F'(r). \label{eq: Hamiltonian xi_r}
\end{align}
Evaluating these at $\Gamma$, both expressions vanish and it follows that $\Gamma$ is invariant under $\H_\q$.
We claim that
\begin{equation} \label{eq: convexity}
    x \notin \Gamma, \quad \H_\q \left( r - r_{\xi_t, \xi_\phi} \right)^2|_x = 0 \quad \Longrightarrow \quad \left(\H_\q\right)^2 \left( r - r_{\xi_t, \xi_\phi} \right)^2|_x > 0.
\end{equation}
To prove this, assume that
\begin{align*}
    0
        = \H_\q \left( r - r_{\xi_t, \xi_\phi} \right)^2|_x
        = 2 \left( r - r_{\xi_t, \xi_\phi} \right) \H_\q r|_x,
\end{align*}
which implies that either $r = r_{\xi_t, \xi_\phi}$ or $\H_\q r = 0$ at $x$.
We compute
\[
    \left(\H_\q\right)^2 \left( r - r_{\xi_t, \xi_\phi} \right)^2|_x
        = 2 \left( \H_\q r \right)^2|_x + 2 \left( r - r_{\xi_t, \xi_\phi} \right) \left(\H_\q\right)^2r|_x.
\]
Now, if $r|_x = r_{\xi_t, \xi_\phi}$, we see that 
\[
    \left(\H_\q\right)^2 \left( r - r_{\xi_t, \xi_\phi} \right)^2|_x 
        = 2 \left( \H_\q r \right)^2|_x \geq 0
\]
and it vanishes if and only if $\H_\q r|_x = 0$, in which case $\xi_r|_x = 0$ and hence $x \in \Gamma$.
Similarly, if instead $\H_\q r|_x = 0$, then $\xi_r|_x = 0$ and we conclude that
\begin{align*}
    \left(\H_\q\right)^2 \left( r - r_{\xi_t, \xi_\phi} \right)^2|_x
        &= 2 \left( r - r_{\xi_t, \xi_\phi} \right) \left(\H_\q\right)^2r|_x \\
        &= 4 \left( r - r_{\xi_t, \xi_\phi} \right) \mu(r) b^2 F'(r)|_x.
\end{align*}
Recalling that $r - r_{\xi_t, \xi_\phi}$ and $F'(r)$ have the same sign, we conclude that this is positive unless $r = r_{\xi_t, \xi_\phi}$, in which case $x \in \Gamma$.
This proves claim \eqref{eq: convexity}.
For any $C > 0$, we define the function
\[
    \E := e^{C\left(r - r_{\xi_t, \xi_\phi}\right)^2} \H_\q \left(r - r_{\xi_t, \xi_\phi} \right)^2.
\]
We get
\[
    \H_\q \E = e^{C\left(r - r_{\xi_t, \xi_\phi}\right)^2} \left(C \left(\H_\q \left(r - r_{\xi_t, \xi_\phi} \right)^2\right)^2 + \left(\H_\q\right)^2 \left(r - r_{\xi_t, \xi_\phi} \right)^2 \right),
\]
which by \eqref{eq: convexity} is positive precisely away from $\Gamma$ if $C$ is large enough.
Thus $\E$ provides a globally defined escape function which grows along each bicharacteristic not in $\Gamma$ and vanishes identically at $\Gamma$.
Note that $\E$ is also an escape function in the case when $(r^2 + a^2)\xi_t + a \xi_\phi$ vanishes at $r_e$ or $r_c$, by substituting $r_{\xi_t, \xi_\phi}$ in the expression for $\E$ with that point.
This proves in particular that $\Gamma$ is precisely the trapped set in $M$, which is claim \ref{claim: trapped set}.

For claim \ref{claim: smooth trapping}, we need to prove that
\[
    \md \left(r - r_{\xi_t, \xi_\phi}\right), \ \md \xi_r \text{ and } \md \p
\]
are linearly independent at $\Gamma$. 
This follows by noting that
\[
    \md \p \left( \frac{\d}{\d r} \right)|_\Gamma
        = \frac{\d \p}{\d r}|_\Gamma
        = \frac1{r^2 + a^2 \cos^2(\theta)} \left( \mu' \xi_r^2 - b^2 F' \right)|_\Gamma 
        = 0.
\]

Finally, the linearization claim in \ref{claim: linearization} follows
immediately from \eqref{eq: Hamiltonian r} and \eqref{eq: Hamiltonian xi_r} by Taylor expanding $F'$ around $r = r_{\xi_t, \xi_\phi}$. 

The full stable and unstable submanifolds then are given as in Dyatlov's
paper \cite{Dya2015}*{Proposition~3.5}. 
By equation \eqref{eq: Hamiltonian xi_r}, we note that
\[
	\H_\q \left( \mu \xi_r^2 - b^2 \left( F - F\left(r_{\xi_t, \xi_\phi}\right)\right) \right) = 0,
\]
which shows that the flow of $\H_\q$ leaves $\Gamma^{u/s}$ invariant.
Moreover, note that
\[
	\Gamma = \Gamma^u \cap \Gamma^s.
\]
The defining functions for $\Gamma^{u/s}$ are
\[
	\xi_r \mp \sgn(r-r_{\xi_t,\xi_\phi})b\sqrt{\frac{F(r)-F(r_{\xi_t,\xi_\phi})}{\mu}} \quad \text{and } \p,
\]
and since $\md \p(\d_{\xi_r})|_\Gamma = 0$, it follows that $\Gamma^{u/s}$ are smooth submanifolds of $\Char(P)$ near $\Gamma$.
A simple rewriting taking into account the eigenvectors of the linearization, or indeed the sign of the escape function $\E$, namely negative on the stable (so $\sgn\xi_r=-\sgn(r-r_{\xi_t,\xi_\phi})$ there), positive on the unstable manifold (so $\sgn\xi_r=\sgn(r-r_{\xi_t,\xi_\phi})$ there), gives the conclusion.
\end{proof}

In order to prove Proposition \ref{prop: main}, we need the following two lemmas.
The first one gives three useful ways of rewriting $h$:

\begin{lemma} \label{le: the polynomial}
We have
\begin{align}
	h(r)
		&= - \frac1r (r\mu' - 4\mu)^2 - 4 \mu \left( 3m - \frac{4 a^2}r \right) \label{eq: Andras identity} \\
		&= 4 \Lambda mr^4 - 4 b^2 r^3 + 12m \left( 1 - \frac{\Lambda a^2}3 \right) r^2 - 12 m^2 r + 4ma^2 \label{eq: full expression} \\
			\begin{split}
				&=\frac {4\Lambda r^4}3 \left( 3 m - \frac{4a^2}r \right) - 4 \left( 1 - \frac{\Lambda a^2}3 \right)^2 \left( r - \frac m{1 - \frac{\Lambda a^2}3}\right)^3 \\
				&\qquad + 4 m^3 \left( \frac{a^2}{m^2} - \frac1{1 - \frac{\Lambda a^2}3} \right),
			\end{split}\label{eq: new identity}
\end{align}
for all $r \in \R$.
\end{lemma}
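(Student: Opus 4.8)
The plan is to treat all three identities as polynomial identities in $r$ (they are claimed to hold for all $r\in\R$, so nothing about the subextremal range enters here) and verify them by direct expansion, organizing the computation so that the algebra stays manageable. First I would compute $h$ from its definition in Proposition \ref{prop: main}. Recall $\mu(r) = (r^2+a^2)(1-\tfrac{\Lambda r^2}{3}) - 2mr = -\tfrac{\Lambda}{3}r^4 + (1-\tfrac{\Lambda a^2}{3})r^2 - 2mr + a^2$, so $\mu' = -\tfrac{4\Lambda}{3}r^3 + 2(1-\tfrac{\Lambda a^2}{3})r - 2m$. The natural intermediate quantity is $g_1(r) := r\mu' - 4\mu$; a short computation gives $g_1 = -\tfrac{4\Lambda}{3}r^4 + 2(1-\tfrac{\Lambda a^2}{3})r^2 - 2mr + \tfrac{4\Lambda}{3}r^4 - 4(1-\tfrac{\Lambda a^2}{3})r^2 + 8mr - 4a^2 = -2(1-\tfrac{\Lambda a^2}{3})r^2 + 6mr - 4a^2$, a quadratic. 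Then $h = 2\mu\, g_1' - \mu'\, g_1$, with $g_1' = -4(1-\tfrac{\Lambda a^2}{3})r + 6m$. Substituting the explicit polynomials and collecting powers of $r$ yields a quartic in $r$; matching coefficients against the right-hand side of \eqref{eq: full expression} establishes that identity. This is the one step I would actually carry out in full, since the other two identities are then algebraic rearrangements of the same quartic.

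For \eqref{eq: Andras identity}, I would expand $-\tfrac1r(r\mu'-4\mu)^2 - 4\mu(3m - \tfrac{4a^2}{r}) = -\tfrac1r g_1^2 - 12m\mu + \tfrac{16a^2}{r}\mu$. Using the quadratic form of $g_1$ above, $g_1^2$ is a quartic whose constant term is $16a^4$ and whose leading term produces the $r^3$ contribution, so $-\tfrac1r g_1^2$ is a genuine polynomial (the $r^{-1}$ apparent singularity cancels because $g_1(0) = -4a^2$ and the $a^4$ term is $16a^4/r$, matched by $\tfrac{16a^2}{r}\mu$ whose constant-in-$r$ piece is $\tfrac{16a^2}{r}a^2$; these cancel). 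Checking that the remaining polynomial coincides with \eqref{eq: full expression} is again a coefficient match. For \eqref{eq: new identity}, I would note that the right-hand side is a polynomial of degree $4$ whose leading term is $\tfrac{4\Lambda}{3}r^4\cdot 3m\cdot\tfrac13 \cdot\tfrac{3}{\Lambda}$—more carefully, $\tfrac{4\Lambda r^4}{3}(3m - \tfrac{4a^2}{r}) = 4\Lambda m r^4 - \tfrac{16\Lambda a^2}{3}r^3$ and $-4(1-\tfrac{\Lambda a^2}{3})^2(r - \tfrac{m}{1-\Lambda a^2/3})^3$ contributes $-4(1-\tfrac{\Lambda a^2}{3})^2 r^3 + \dots$; summing the $r^3$ terms should reproduce $-4b^2 r^3$ since $b = 1 + \tfrac{\Lambda a^2}{3}$ and one checks $\tfrac{16\Lambda a^2}{3} + 4(1-\tfrac{\Lambda a^2}{3})^2 = 4b^2$ by expanding both sides. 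The lower-order terms (the $r^2$, $r^1$, $r^0$ coefficients) are then matched against \eqref{eq: full expression}, with the constant $4m^3(\tfrac{a^2}{m^2} - \tfrac1{1-\Lambda a^2/3})$ absorbing the $r^0$ discrepancy; this is the cleanest way to see \eqref{eq: new identity} is an exact decomposition rather than an approximation.

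The whole lemma is thus a finite sequence of polynomial identity verifications, and the only real obstacle is bookkeeping: keeping the $\Lambda$, $a$, $m$ dependence straight while expanding $(r - \tfrac{m}{1-\Lambda a^2/3})^3$ and $(1-\tfrac{\Lambda a^2}{3})^2$, and confirming that the two apparently-rational expressions \eqref{eq: Andras identity} and \eqref{eq: new identity} really are polynomials (i.e.\ the negative-power terms cancel). I expect \eqref{eq: full expression} to be proved by brute expansion and then \eqref{eq: Andras identity} and \eqref{eq: new identity} to follow by showing each right-hand side expands to the same quartic; no conceptual input beyond the definition of $\mu$ is needed. The payoff, used later in the section, is that \eqref{eq: Andras identity} isolates the sign-determining factor $3m - \tfrac{4a^2}{r}$ together with the manifestly nonpositive $-\tfrac1r(r\mu'-4\mu)^2$, which is presumably the form exploited to prove Proposition \ref{prop: main}.
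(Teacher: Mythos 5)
Your proposal is correct and is essentially the paper's argument: all three identities are verified by elementary polynomial algebra starting from the observation that $r\mu'-4\mu=-2\left(1-\tfrac{\Lambda a^2}{3}\right)r^2+6mr-4a^2$ is a quadratic. The only (immaterial) difference is ordering --- the paper first obtains \eqref{eq: Andras identity} via the slightly slicker rewriting $\partial_r(r\mu'-4\mu)=\tfrac2r(r\mu'-4\mu)-6m+\tfrac{8a^2}{r}$ and then expands that to get \eqref{eq: full expression} and rearranges for \eqref{eq: new identity}, whereas you brute-force \eqref{eq: full expression} first and match the other two against it; both routes go through, and your checks (the cancellation of the $r^{-1}$ terms, and $\tfrac{16\Lambda a^2}{3}+4(1-\tfrac{\Lambda a^2}{3})^2=4b^2$) are the right ones.
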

\begin{remark} \label{rmk: Lambda a2}
Note that the discriminant condition \eqref{eq: discriminant} ensures that
\[
    1 - \frac{\Lambda a^2}3 > 0,
\]
hence the expressions in the lemma make sense.
\end{remark}

\begin{proof}
Note that
\begin{align*}
	r \mu' - 4 \mu 
		&= - 2 \left( 1 - \frac{\Lambda a^2}3 \right) r^2 + 6m r - 4a^2, \\
	\d_r(r \mu' - 4 \mu)
		&= - 4 \left( 1 - \frac{\Lambda a^2}3 \right) r + 6m \\
		&= \frac 2r \left( r \mu' - 4 \mu \right) - 6m + \frac{8 a^2}r.
\end{align*}
Inserting this gives 
\begin{align*}
	2 \mu \d_r (r \mu' - 4 \mu) - \mu'(r \mu' - 4 \mu)
		&= 2 \mu \left(\frac 2r \left( r \mu' - 4 \mu \right) - 6m + \frac{8 a^2}r \right) - \mu'(r \mu' - 4 \mu) \\
		&= (r\mu' - 4\mu)\left( \frac{4 \mu}r - \mu' \right) - 4 \mu \left( 3m - \frac{4 a^2}r \right) \\
		&= - \frac1r (r\mu' - 4\mu)^2 - 4 \mu \left( 3m - \frac{4 a^2}r \right),
\end{align*}
proving identity \eqref{eq: Andras identity}.
The identity \eqref{eq: full expression} now follows by
\begin{align*}
	&- \frac1r (r\mu' - 4\mu)^2  - 4 \mu \left( 3m - \frac{4 a^2}r \right) \\*
		&\quad = - \frac1r \left( - 2 \left( 1 - \frac{\Lambda a^2}3 \right) r^2 + 6mr - 4a^2 \right)^2 \\
		&\quad \qquad - 4 \left( - \frac{\Lambda r^4}3 + \left( 1 - \frac{\Lambda a^2}3 \right) r^2 - 2mr + a^2 \right) \left( 3m - \frac{4 a^2}r \right) \\*
		&\quad = - 4 \left( 1 - \frac{\Lambda a^2}3 \right)^2 r^3 - 36 m^2 r - \frac{16 a^4}r \\
		&\quad \qquad + 24 m \left( 1 - \frac{\Lambda a^2}3 \right) r^2 - 16 \left( 1 - \frac{\Lambda a^2}3 \right) a^2 r + 48 ma^2 \\
		&\quad \qquad + 4 \Lambda m r^4 - 12 m \left( 1 - \frac{\Lambda a^2}3 \right) r^2 + 24m^2 r - 12 ma^2 \\
		&\quad \qquad - \frac{16 \Lambda a^2}3 r^3 + 16 \left( 1 - \frac{\Lambda a^2}3 \right) a^2 r - 32 ma^2 + \frac{16a^4}r \\
		&\quad = 4 \Lambda mr^4 - 4 \left( 1 + \frac{\Lambda a^2}3 \right)^2 r^3 + 12 m \left( 1 - \frac{\Lambda a^2}3 \right) r^2 - 12 m^2r + 4 ma^2,
\end{align*}
as claimed.
Finally, the identity \eqref{eq: new identity} follows by
\begin{align*}
	&4 \Lambda mr^4 - 4 \left( 1 + \frac{\Lambda a^2}3 \right)^2 r^3 + 12 m \left( 1 - \frac{\Lambda a^2}3 \right) r^2 - 12 m^2r + 4 ma^2 \\
	&\quad = 4 \Lambda mr^4 - 4 \left( 1 + \frac{\Lambda a^2}3 \right)^2 r^3 + 4 \left( 1 - \frac{\Lambda a^2}3 \right)^2 r^3 \\*
	&\quad \qquad - 4 \left( 1 - \frac{\Lambda a^2}3 \right)^2 r^3 + 12 m \left( 1 - \frac{\Lambda a^2}3 \right) r^2 - 12 m^2 r + \frac{4 m^3}{1 - \frac{\Lambda a^2}3} \\*
	&\quad \qquad + 4m^3 \left( \frac{a^2}{m^2} - \frac1{1 - \frac{\Lambda a^2}3} \right) \\
	&\quad = \frac {4\Lambda r^4}3 \left( 3 m - \frac4r a^2 \right) - 4 \left( 1 - \frac{\Lambda a^2}3 \right)^2 \left( r - \frac m{1 - \frac{\Lambda a^2}3}\right)^3 \\*
	&\quad \qquad + 4 m^3 \left( \frac{a^2}{m^2} - \frac1{1 - \frac{\Lambda a^2}3} \right). \qedhere
\end{align*}
\end{proof}

\begin{lemma} \label{le: discriminant}
Given $a \in \R$ and $m > 0$, there is a (potentially empty) interval $(\Lambda_0, \Lambda_1)$ such that $a, m$ and $\Lambda \geq 0$ satisfy the discriminant condition \eqref{eq: discriminant} if and only if
\[
    \Lambda \in (\Lambda_0, \Lambda_1),
\]
in which case we have
\begin{equation} \label{eq: maximal condition}
    \frac{a^2}{m^2} < \frac98\frac1{1 - \frac{\Lambda a^2}3}.
\end{equation}
If $\abs a > m$, then $\Lambda_0 > 0$.
\end{lemma}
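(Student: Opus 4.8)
The plan is to pass to the variables $t := \tfrac{\Lambda a^2}{3}\ge 0$ and $u := \tfrac{a^2}{m^2}>0$, in which the interval structure becomes transparent. (The case $a=0$ is immediate: \eqref{eq: discriminant} then reads $1-9\Lambda m^2>0$, so the admissible set is $[0,\tfrac1{9m^2})$, and \eqref{eq: maximal condition} is the trivial inequality $0<\tfrac98$.) Assuming $a\neq 0$ and using $\Lambda a^2=3t$, $9\Lambda m^2=\tfrac{27t}{u}$, the condition \eqref{eq: discriminant} becomes
\[
	g(t)>\chi_t(u),\qquad g(t):=36t(1-t)+(1-t)^3=(1-t)(t^2+34t+1),\quad \chi_t(u):=(1+t)^4u+\frac{27t}{u}.
\]

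For the interval claim I would view $t\mapsto g(t)-\chi_t(u)$ (fixed $u$) and compute $\partial_t^2\big(g(t)-\chi_t(u)\big)=-66-6t-12u(1+t)^2<0$ for $t\ge0$; thus this function is strictly concave on $[0,\infty)$ and tends to $-\infty$ (since $\chi_t(u)\sim ut^4$), so its strict superlevel set is a bounded interval. Intersecting with $[0,\infty)$ and rescaling via $\Lambda=\tfrac{3t}{a^2}$ gives the interval $(\Lambda_0,\Lambda_1)$. If $|a|>m$ then $u>1$, so at $t=0$ the condition reads $1>u$, which fails; by continuity it fails on a neighbourhood of $t=0$, so $\Lambda_0>0$ (vacuously if the interval is empty).

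The substantive part is \eqref{eq: maximal condition}, i.e.\ that $u<\tfrac9{8(1-t)}$ whenever $g(t)>\chi_t(u)$. The engine is the identity
\[
	g(t)^2-108\,t(1+t)^4=(t^2-14t+1)^3,
\]
together with $9(1+t)^4-8(1-t)^2(t^2+10t+1)=(t^2-14t+1)^2$ and the positivity of $3(1+t)^4-64t(1-t)^2=3t^4-52t^3+146t^2-52t+3$ on $[0,7-4\sqrt 3)$; all three are verified by direct expansion, the last two most cleanly via the palindromic substitution $w=t+t^{-1}$. Given $g(t)>\chi_t(u)$: since $g(t)>0$ one has $t<1$; since $\chi_t(u)\ge 2\sqrt{27t(1+t)^4}=\sqrt{108\,t(1+t)^4}$ by the arithmetic--geometric mean inequality, squaring $g(t)>\sqrt{108\,t(1+t)^4}$ and invoking the first identity force $(t^2-14t+1)^3>0$, hence $t<7-4\sqrt 3$. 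On this range, $3(1+t)^4-64t(1-t)^2>0$ shows the minimiser $u^\ast:=\tfrac{3\sqrt{3t}}{(1+t)^2}$ of $\chi_t$ satisfies $u^\ast<v_0:=\tfrac9{8(1-t)}$, while the second identity gives $\chi_t(v_0)\ge g(t)$. Since $\chi_t$ is convex on $(0,\infty)$ and strictly increasing on $[u^\ast,\infty)\ni v_0$, the chain $\chi_t(u)<g(t)\le\chi_t(v_0)$ forces $u<v_0$, which is \eqref{eq: maximal condition}.

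The main obstacle is spotting the identity $g(t)^2-108\,t(1+t)^4=(t^2-14t+1)^3$: the quantity $g(t)^2-108\,t(1+t)^4$ is the discriminant (in $u$) of $\chi_t(u)-g(t)$ after clearing denominators, and recognizing that it is a perfect cube — so that the admissible range of $t$ collapses to the small interval $[0,7-4\sqrt 3)$ — is exactly what makes the convexity argument close and what pins down the constant $\tfrac98$. Everything else (the concavity computation for the interval, the arithmetic--geometric mean step, and the two auxiliary identities) is routine polynomial algebra.
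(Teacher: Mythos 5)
Your proof is correct, but it takes a genuinely different route from the paper's. The paper substitutes $\gamma=\frac{\Lambda a^2}{3}$, $\nu=\frac{\gamma}{(1-\gamma)^2}$ and $\alpha=\frac{m^2}{a^2(1-\gamma)}$, after which \eqref{eq: discriminant} becomes the positivity of a downward quadratic in $\nu$, namely $-16\nu^2+(-27\alpha^2+36\alpha-8)\nu-1+\alpha>0$; the interval claim is read off from the quadratic structure together with the injectivity of $\gamma\mapsto\nu$, and \eqref{eq: maximal condition} drops out immediately because the discriminant of that quadratic is $\alpha(9\alpha-8)^3$, which must be positive for the quadratic to take a positive value somewhere. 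You instead keep $u=a^2/m^2$ as a genuinely $\Lambda$-independent parameter, obtain the interval claim from strict concavity of $t\mapsto g(t)-\chi_t(u)$ (which is arguably tighter than the paper's argument, since there $\alpha$ also varies with $\gamma$ and the ``quadratic in $\nu$'' reasoning silently suppresses this dependence), and then extract \eqref{eq: maximal condition} from the perfect-cube identity $g(t)^2-108\,t(1+t)^4=(t^2-14t+1)^3$ combined with the arithmetic--geometric mean inequality and convexity of $\chi_t$ in $u$. I checked your three identities, as well as the factorization $3t^4-52t^3+146t^2-52t+3=(t^2-14t+1)(3t-1)(t-3)$ that underlies the positivity on $[0,7-4\sqrt{3})$, and the identity $\chi_t(v_0)-g(t)=\frac{(t^2-14t+1)^2}{8(1-t)}\ge 0$; all are correct, as is the concluding monotonicity chain $\chi_t(u)<g(t)\le\chi_t(v_0)$ with $u^\ast<v_0$. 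The trade-off: the paper's $(\nu,\alpha)$ substitution makes the constant $\frac98$ appear with almost no computation, while your route costs several polynomial identities but cleanly isolates the $\Lambda$-dependence, handles $a=0$ explicitly, and makes the interval structure fully rigorous.
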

\begin{proof}
Writing $\gamma := \frac{\Lambda a^2}3$, the discriminant condition \eqref{eq: discriminant} becomes
\[
    - \left(1 + \gamma\right)^4 \frac{a^2}{m^2} + 36(1 - \gamma)\gamma + (1 - \gamma)^3 - 27\gamma \frac{m^2}{a^2} > 0.
\]
We introduce
\[
    \nu := \frac \gamma {(1 - \gamma)^2}, \quad \a := \frac{m^2}{a^2(1 - \gamma)}.
\]
Multiplying with $\frac{m^2}{a^2}$ and using that
\[
    (1 + \gamma)^4 = ((1 - \gamma)^2 + 4 \gamma)^2 = \frac{\gamma^2}{\nu^2} + 8 \frac{\gamma^2}\nu + 16 \gamma^2,
\]
we note that \eqref{eq: discriminant} is equivalent to
\[
    \frac{\gamma^2}{\nu^2}\left( - 1 - 8 \nu - 16 \nu^2 + 36 \nu \a + \a - 27 \nu \a^2\right) > 0
\]
which in turn is equivalent to
\[
    - 16 \nu^2 + (- 27 \a^2 + 36 \a - 8) \nu - 1 + \a > 0.
\]
Since this is a quadratic expression in $\nu$ and since
\[
    (0, 1) \ni \gamma \mapsto \nu
\]
is injective, this proves the first assertion by Remark \ref{rmk: Lambda a2}.
The discriminant of the quadratic expression in $\nu$ is
\[
    \a (9 \a - 8)^3,
\]
which is positive if and only if \eqref{eq: maximal condition} is satisfied.
Finally, in case $\abs a > m$, equation \eqref{eq: discriminant} implies that $\Lambda = 0$ is not allowed, proving the last assertion.
\end{proof}

\begin{remark}
The case of Reissner-Nordstr\"om-de Sitter spacetimes gives rise to a
very similar subextremality condition, which was explicitly analyzed
in \cite{H2018}*{Proposition~3.2}, following \cite{KI2004}.
  \end{remark}

\begin{proof}[Proof of Proposition \ref{prop: main}] \

\noindent
\textbf{Step 1:}
\emph{The case when $r \in \left( \frac{4a^2}{3m}, r_c \right) \cap \left( r_e, r_c \right)$.}
This is immediate from \eqref{eq: Andras identity}, by noting that
\[
    3 m - \frac{4a^2}r
		> 0, \quad \mu(r) > 0,
\]
in the interval.

\noindent
\textbf{Step 2:}
\emph{The case when $\frac1{1 - \frac{\Lambda a^2}3} \geq \frac{a^2}{m^2}$.}
Since
\[
	\mu'(r) = - \frac{4\Lambda r^3}3 + 2 \left( 1 - \frac{\Lambda a^2}3 \right) r - 2m,
\]
we know that
\[
	\mu'(r) < 0,
\]
for all 
\[
	r \in \left[0, \frac m{1 - \frac{\Lambda a^2}3} \right].
\]
Since we know that
\[
	r_- < 0 < r_C < r_e < r_c,
\]
and
\[
	\mu(0) = a^2 > 0,
\]
it follows that
\[
	r_e > \frac m{1 - \frac{\Lambda a^2}3}.
\]
The proof in this case is completed by combining this with \eqref{eq: new identity} and Step 1.

\noindent
\textbf{Step 3:}
\emph{The remaining case.}
We are left with proving the statement when
\begin{align*}
	&\frac1{1 - \frac{\Lambda a^2}3} 
		< \frac{a^2}{m^2}, \\
	&r 
		\in \left( r_e, \frac{4a^2}{3m} \right].
\end{align*}
The assertion in Proposition \ref{prop: main} is the negativity of
\[
	h(r) := 2 \mu f' - \mu' f,
\]
where
\[
	f(r) := r \mu' - 4 \mu = - 2 \left( \left( 1 - \frac{\Lambda a^2}3 \right) r^2 - 3mr + 2 a^2 \right).
\]
By Lemma \ref{le: discriminant}, we need to check the condition in an interval $\Lambda \in (\Lambda_0, \Lambda_1)$, where \[
    \Lambda_0 > 0.
\]
We first claim that $h(r)$ decreases with increasing $\Lambda$ for all $r \in \left( r_e, \frac{4 a^2}{3m}\right)$.
Differentiating $h$ with respect to $\Lambda$, using equation \eqref{eq: full expression}, gives
\[
	\frac{\d}{\d \Lambda} h(r)
		= 4 m r^4 - \frac 83 a^2br^3 - 4 m a^2r^2 = 4 m^3r^2 \left(\left(\frac r m\right)^2 - \frac 23 \frac{a^2}{m^2} b \frac rm - \frac{a^2}{m^2}\right),
\]
which is negative for all $r \in (r_1, r_2)$, where
\[
	\frac{r_{1/2}}m = \frac{a^2b}{3 m^2} \mp \sqrt{\frac{a^4b^2}{9 m^4} + \frac{a^2}{m^2}}.
\]
The claim is proven if we can show that $(r_1, r_2) \subseteq (r_e, r_c)$.
Since
\[
    r_1 < 0 < r_e,
\]
it remains to show that
\[
    r_2 \geq \frac{4a^2}{3m},
\]
which is equivalent to
\[
    \sqrt{\frac{a^4}{9 m^4}\left(1 + \frac{\Lambda a^2}3\right)^2 + \frac{a^2}{m^2}} 
        \geq \frac{4a^2 - a^2 b}{3m^2}
        = \frac{3 - \frac{\Lambda a^2}3}{3m^2}a^2,
\]
which in turn is equivalent to
\begin{align*}
    \frac{a^2}{m^2}
        &\geq \left(\left(3 - \frac{\Lambda a^2}3\right)^2 - \left(1 + \frac{\Lambda a^2}3\right)^2\right) \frac{a^4}{9 m^4} \\
        &= 8 \left(1 - \frac{\Lambda a^2}3\right)\frac{a^4}{9 m^4}.
\end{align*}
This inequality is equivalent to \eqref{eq: maximal condition}, so the proof of the claim is complete.

It therefore suffices to prove the negativity of $h(r)$ for all $r \in \left( r_e, \frac{4 a^2}{3m}\right]$, in the limit when $\Lambda = \Lambda_0$.
By definition of $\Lambda_0$, $\mu$ has at least a double root when $\Lambda = \Lambda_0$.
Since $\mu(0) = a^2 > 0$, independent of $\Lambda$, there is still a simple negative root $r_- < 0$.
Since $\mu$ decreases with $\Lambda$, the case $r_C < r_e = r_c$ is excluded for $\Lambda = \Lambda_0$, for it would contradict that $\mu$ has four distinct real roots for any $\Lambda > \Lambda_0$.
Similarly, the case $r_C = r_e = r_c$ can be excluded, since also $\mu'$ decreases with $\Lambda$ and would in that case not have three distinct real roots for any $\Lambda > \Lambda_0$.
We conclude that $r_- < r_C = r_e < r_c$ when $\Lambda = \Lambda_0$.
In this case, $\mu(r_e) = \mu'(r_e) = 0$ and we note that $r_e$ satisfies the equation
\begin{equation} \label{eq: extremal polynomial}
	0 = r_e \mu'(r_e) - 4 \mu(r_e) = - 2 \left( \left( 1 - \frac{\Lambda_0 a^2}3 \right) r_e^2 - 3mr_e + 2 a^2 \right).
\end{equation}
Since
\[
	r\mu' - 4 \mu = r^5\d_r \left( r^{-4} \mu \right),
\]
and since $r^{-4} \mu$ has two positive zeros $r_C = r_e$ and $r_c$, we conclude that $r_e$ coincides with the smaller root of \eqref{eq: extremal polynomial}:
\[
	r_e = \frac{3m}{2\left( 1 - \frac{\Lambda_0 a^2}3 \right)} - \frac m{2\left( 1 - \frac{\Lambda_0 a^2}3 \right)} \sqrt{9 - 8 \frac{a^2}{m^2}\left( 1 - \frac{\Lambda_0 a^2}3\right)}.
\]
The idea is now to estimate the Taylor expansion at $r_e$ for $r \geq r_e$. 
We write
\[
    h(r) 
        = \sum_{k = 0}^4 h^{(k)}(r_e) \frac{(r - r_e)^k}{k!}.
\]
It is immediate from \eqref{eq: full expression} that
\[
    \frac{h^{(4)}(r_e)}{4!} = 4 \Lambda_0 m.
\]
Using that
\[
	\mu(r_e) = \mu'(r_e) = f(r_e) = 0,
\]
one computes that
\begin{align*}
    h(r_e) 
        &= h'(r_e) = h''(r_e) = 0, \\
	h'''(r_e)
		&= 3\mu''(r_e)f''(r_e) - \mu'''(r_e)f'(r_e).
\end{align*}
We note that
\[
	f'(r) = - 4 \left( 1 - \frac{\Lambda_0 a^2}3 \right) r + 6m = r \mu''(r) - 3 \mu'(r)
\]
and hence
\[
	f'(r_e) = 2m \sqrt{9 - 8 \frac{a^2}{m^2}\left( 1 - \frac{\Lambda_0 a^2}3 \right)} = r_e \mu''(r_e).
\]
Moreover,
\[
	f''(r_e) = r_e \mu'''(r_e) - 2 \mu''(r_e).
\]
Inserting this, we get
\begin{align}\label{eq:hppp-estimate}
	h'''(r_e)
		&= - 6 \mu''(r_e)^2 + 2 r_e \mu''(r_e) \mu'''(r_e) \\
		&\leq 2 f'(r_e) \mu'''(r_e) \\
		&= - 32 \Lambda_0 m r_e \sqrt{9 - 8 \frac{a^2}{m^2}\left( 1 - \frac{\Lambda_0 a^2}3 \right)}.
\end{align}
Using that
\[
	r_e 
		\geq \frac m{1 - \frac{\Lambda_0 a^2}3},
\]
the Taylor expansion at $r = r_e$ can be estimated as
\begin{align*}
	\frac{h(r)}{( r - r_e )^3}
		&\leq - \frac{16}3 \frac{\Lambda_0 m^2}{1 - \frac{\Lambda a^2}3} \sqrt{9 - 8 \frac{a^2}{m^2}\left( 1 - \frac{\Lambda_0 a^2}3 \right)} + 4 \Lambda_0 m (r - r_e)
\end{align*}
for $r \geq r_e$.
Now, the above bound is increasing with $r$, it therefore suffices to check the negativity at
\[
	\hat r 
	    := \frac{3m}{2 \left( 1 - \frac{\Lambda_0 a^2}3 \right)}
	     \geq \frac 32 \frac 89 \frac{a^2}{m} 
        = \frac{4 a^2}{3 m}.
\]
We get
\begin{align*}
	\frac{h\left( \hat r \right)}{\left( \hat r - r_e \right)^3}
		&\leq - \frac{16}3 \frac{\Lambda_0 m^2}{1 - \frac{\Lambda a^2}3} \sqrt{9 - 8 \frac{a^2}{m^2}\left( 1 - \frac{\Lambda_0 a^2}3 \right)} \\*
		&\qquad + 2 \frac{\Lambda_0 m^2}{1 - \frac{\Lambda_0 a^2}3} \sqrt{9 - 8 \frac{a^2}{m^2}\left( 1 - \frac{\Lambda_0 a^2}3 \right)} \\
		&< 0.
\end{align*}
We have in particular shown that $h(r) < 0$ when  
\[
	r \in \left( r_e, \frac{4a^2}{3m} \right], \quad \Lambda = \Lambda_0.
\]
It follows that $h < 0$ on this interval for all $\Lambda \in (\Lambda_0, \Lambda_1)$.
The proof is completed by applying Step 1.
\end{proof}

\begin{remark}
  We also present a somewhat different approach to proving
  \eqref{eq:hppp-estimate}, which was the key computation above.
  By \eqref{eq: full expression} we have, with $\Lambda=\Lambda_0$, $r=r_e$,
$$
h'''(r)=24 (4\Lambda m r-(1+\gamma)^2),
$$
and want to prove that
\begin{equation}\label{eq:hppp-est2}
	h'''(r_e)
		\leq -96\Lambda r_e\sqrt{1-\frac{8}{9\alpha}},
\end{equation}
where, as in Lemma~\ref{le: discriminant}, $\alpha=\frac{m^2}{a^2(1-\gamma)}$, $\gamma=\frac{\Lambda a^2}{3}$, and $\alpha\in[8/9,1]$ for us. Also,
$$
	r_e=\frac{3m}{2(1-\gamma)}\Big(1-\sqrt{1-\frac{8}{9\alpha}}\Big).
$$
Thus,
$$
	h'''(r_e)=24 \left(\frac{6\Lambda m^2}{1-\gamma}\Big(1-\sqrt{1-\frac{8}{9\alpha}}\Big)-(1+\gamma)^2\right),
$$
and
\begin{equation}\label{eq:Lm2-exp}
\Lambda m^2=3\frac{\Lambda a^2}{3}\frac{m^2}{a^2(1-\gamma)}(1-\gamma)=3\alpha\gamma(1-\gamma),
\end{equation}
so
$$
	\frac{h'''(r_e)}{24}=18\alpha\gamma\Big(1-\sqrt{1-\frac{8}{9\alpha}}\Big)-(1+\gamma)^2.
$$
On the other hand, the right hand side of \eqref{eq:hppp-est2} divided by 24 is
$$
-4\frac{3\Lambda m^2}{2(1-\gamma)}\Big(1-\sqrt{1-\frac{8}{9\alpha}}\Big)\sqrt{1-\frac{8}{9\alpha}}.
$$
which is, using the rewriting \eqref{eq:Lm2-exp} for $\Lambda m^2$,
$$
-18\alpha\gamma\Big(1-\sqrt{1-\frac{8}{9\alpha}}\Big)\sqrt{1-\frac{8}{9\alpha}}.
$$
Subtracting this from $h'''(r_e)/24$, one wants to show that it is $\leq 0$. But this is
$$
18\alpha\gamma\Big(1-\sqrt{1-\frac{8}{9\alpha}}\Big)\Big(1+\sqrt{1-\frac{8}{9\alpha}}\Big)-(1+\gamma)^2,
$$
which simplifies to
$$
	18\gamma\alpha \left( 1-1+\frac{8}{9\alpha} \right) -(1+\gamma)^2=16\gamma-(1+\gamma)^2,
$$
which is exactly the negative of the quadratic polynomial one gets in the discriminant when one wants to assure that $r\mu'-\mu=r^2(r^{-1}\mu)'$ has distinct real roots, as is the case in the
subextremal range, namely one each in $(r_C,r_e)$ and in $(r_e,r_c)$
corresponding to the critical points of $r^{-1}\mu$ there, i.e.\ $\gamma$ has to be such that the negative of this
is $\geq 0$, so it is indeed $\leq 0$.
\end{remark}

Finally, we combine Theorem \ref{thm: trapping in Kerr-de Sitter} with the results of \cite{V2013} in the proof of Theorem \ref{thm: asymptotic expansion}:
\begin{proof}[Proof of Theorem \ref{thm: asymptotic expansion}]
We again consider the operator
\[
    P_\s: \{u \in \bar H^s \mid P_\s u \in \bar H^{s-1} \} \to \bar H^{s-1}
\]
from Theorem \ref{thm: Fredholm}.
The semi-classical trapping is corresponding to the trapping of bicharacteristics of the full wave operator $P$.
Given that Theorem \ref{thm: trapping in Kerr-de Sitter} implies that the trapping is normally hyperbolic, the proof of the semi-classical estimates and consequently the proof of Theorem \ref{thm: asymptotic expansion} proceeds completely analogous to the proof of \cite{V2013}*{Theorem~1.4}.
\end{proof}

\section*{Acknowledgements}
\noindent
The authors are grateful to Peter Hintz, Rita Texeira da Costa and Maciej Zworski for very helpful discussions and comments on the manuscript. 
The first author is grateful for the support of the Deutsche Forschungsgemeinschaft (DFG, German Research Foundation) – GZ: LI3638/1-1, AOBJ: 660735 and the Stanford Mathematics Research Center. 
The second author gratefully acknowledges support from the National Science Foundation under grant number DMS-1953987.

\begin{bibdiv}
\begin{biblist}

\bib{CTC2021}{article}{
   author={Casals, Marc},
   author={Teixeira da Costa, Rita},
   title={Hidden spectral symmetries and mode stability of subextremal
   Kerr(-de Sitter) black holes},
   journal={Comm. Math. Phys.},
   volume={394},
   date={2022},
   number={2},
   pages={797--832},
}

\bib{DR2011}{inproceedings}{
	author = {Dafermos, Mihalis},
	author={Rodnianski, Igor},
	title = {The black hole stability problem for linear scalar perturbations},
	booktitle={Proceedings of the Twelfth Marcel Grossmann Meeting on General Relativity},
	editor={T. Damour et al},
	publisher={World Scientific, Singapore},
	pages={132--189},
	note={arXiv:1010.5137},
	year={2011}
}

\bib{DRSR16}{article}{
	author={Dafermos, Mihalis},
	author={Rodnianski, Igor},
	author={Shlapentokh-Rothman, Yakov},
    title = {Decay for solutions of the wave equation on {K}err exterior spacetimes {III}: {T}he full subextremal case {$\vert a \vert < M$}},
	journal = {Ann. of Math. (2)},
	volume = {183},
	year = {2016},
	number = {3},
	pages = {787--913},
}

\bib{D2011}{article}{
	author = {Dyatlov, Semyon},
    title = {Quasi-normal modes and exponential energy decay for the {K}err-de {S}itter black hole},
    journal = {Comm. Math. Phys.},
	volume = {306},
    year = {2011},
	number = {1},
	pages = {119--163},
}

\bib{Dya2015}{article}{
	author = {Dyatlov, Semyon},
    title = {Asymptotics of linear waves and resonances with applications to black holes},
	journal = {Comm. Math. Phys.},
	volume = {335},
	year = {2015},
	number = {3},
	pages = {1445--1485},
}

\bib{Dya2015b}{article}{
	author = {Dyatlov, Semyon},
	title = {Resonance projectors and asymptotics for {$r$}-normally hyperbolic trapped sets},
	journal = {J. Amer. Math. Soc.},
	volume = {28},
	year = {2015},
	number = {2},
	pages = {311--381},
}

\bib{Dya2016}{article}{
	author = {Dyatlov, Semyon},
	title = {Spectral gaps for normally hyperbolic trapping},
	journal = {Ann. Inst. Fourier (Grenoble)},
	volume = {66},
	year = {2016},
	number = {1},
	pages = {55--82},
}

\bib{FKSY06}{article}{
	author = {Finster, Felix},
	author={Kamran, Niky},
	author={Smoller, Joel},
	author={Yau, Shing-Tung},
	title = {Decay of solutions of the wave equation in the {K}err geometry},
	journal = {Comm. Math. Phys.},
	volume = {264},
	year = {2006},
	number = {2},
	pages = {465--503},
}

\bib{GZ2020}{article}{
	author={Galkowski, Jeffrey},
	author={Zworski, Maciej},
	title={Analytic hypoellipticity of Keldysh operators},
	journal={Proc. Lond. Math. Soc.},
	volume={123},
	year={2021},
	number={5},
	pages={498--516},
}

\bib{H2018}{article}{
	author = {Hintz, Peter},
	title = {Non-linear stability of the {K}err--{N}ewman--de {S}itter family of charged black holes},
	journal = {Ann. PDE},
	volume = {4},
	year = {2018},
	number = {1},
	pages = {Paper No. 11, 131},
}

\bib{H2021}{article}{
	author = {Hintz, Peter},
	title ={Normally hyperbolic trapping on asymptotically stationary spacetimes},
	journal={Probability and Mathematical Physics},
	volume={2},
	number={1},
	pages={71–126},
	year={2021},
}

\bib{HV2015}{article}{
	author = {Hintz, Peter},
	author={Vasy, Andr{\'a}s},
	title = {Semilinear wave equations on asymptotically de {S}itter,
              {K}err--de {S}itter and {M}inkowski spacetimes},
	journal = {Anal. PDE},
	volume = {8},
	year = {2015},
	number = {8},
	pages = {1807--1890},
}

\bib{HV2016}{article}{
	author={Hintz, Peter},
	author={Vasy, Andr{\'a}s},
	title={Global analysis of quasilinear wave equations on asymptotically {K}err--de {S}itter spaces},
	journal={Int. Math. Res. Not.},
	year={2016},
	number={17},
	pages={5355--5426},
}

\bib{HV2018}{article}{
	author = {Hintz, Peter},
	author = {Vasy, Andr{\'a}s},
	title = {The global non-linear stability of the {K}err--de {S}itter family of black holes},
    journal = {Acta Math.},
	volume = {220},
    year = {2018},
	number = {1},
	pages = {1--206},
}

\bib{HorIII}{book}{
	author = {H\"{o}rmander, Lars},
	title = {The analysis of linear partial differential operators. {III}},
	series = {Classics in Mathematics},
	note = {Pseudo-differential operators, Reprint of the 1994 edition},
	publisher = {Springer, Berlin},
	year = {2007},
	pages = {viii+525},
}

\bib{KI2004}{article}{
	author = {Kodama, Hideo},
	author={Ishibashi, Akihiro},
	title = {Master equations for perturbations of generalised static black holes with charge in higher dimensions},
	journal = {Progr. Theoret. Phys.},
	volume = {111},
	year = {2004},
	number = {1},
	pages = {29--73},
}

\bib{Me1993}{book}{
	author={Melrose, Richard B.},
	title={The Atiyah-Patodi-Singer index theorem},
	series={Research Notes in Mathematics},
	volume={4},
	publisher={A K Peters, Ltd., Wellesley, MA},
	date={1993},
	pages={xiv+377},
}

\bib{NZ2015}{article}{
	author = {Nonnenmacher, St{\'e}phane},
	author = {Zworski, Maciej},
	title = {Decay of correlations for normally hyperbolic trapping},
	journal = {Invent. Math.},
	volume = {200},
	year = {2015},
	number = {2},
	pages = {345--438},
}

\bib{SR2015}{article}{
	author = {Shlapentokh-Rothman, Yakov},
	title = {Quantitative mode stability for the wave equation on the {K}err spacetime},
	journal = {Ann. Henri Poincar\'e},
	volume = {16},
	year = {2015},
	number = {1},
	pages = {289--345},
}

\bib{PV2021}{article}{
	author={Petersen, Oliver}, 
	author={Vasy, Andr\'{a}s}, 
	title={Analyticity of quasinormal modes in the Kerr and Kerr-de Sitter spacetimes},
	journal={Preprint: arXiv:2104.04500}
}

\bib{V2013}{article}{
	author={Vasy, Andr\'{a}s},
	title={Microlocal analysis of asymptotically hyperbolic and Kerr-de Sitter spaces (with an appendix by Semyon Dyatlov)},
	journal={Invent. Math.},
	volume={194},
	date={2013},
	number={2},
	pages={381--513},
}

\bib{W1989}{article}{
	author = {Whiting, Bernard F.},
	title = {Mode stability of the {K}err black hole},
	journal = {J. Math. Phys.},
	volume = {30},
	year = {1989},
	number = {6},
	pages = {1301--1305},
}

\bib{WZ2011}{article}{
	author = {Wunsch, Jared},
	author={Zworski, Maciej},
	title = {Resolvent estimates for normally hyperbolic trapped sets},
	journal = {Ann. Henri Poincar\'e},
	volume = {12},
	year = {2011},
	number = {7},
	pages = {1349--1385},
}

\end{biblist}
\end{bibdiv}

\end{sloppypar}
\end{document}